\title{Symmetry of $f$-vectors of toric arrangements in general position and some applications}
\author{D. Bergerová}
\date{\today}
\keywords{Hyperplane arrangements, toric arrangements, deletion-restriction}
\theoremstyle{plain}
\newtheorem{thm}{Theorem}[section]
\newtheorem{lem}[thm]{Lemma}
\newtheorem{prop}[thm]{Proposition}
\newtheorem{cor}[thm]{Corollary}
\newtheorem*{unlabeledthm}{Theorem}
\theoremstyle{definition}
\newtheorem{defn}{Definition}[section]
\newtheorem{exmp}{Example}[section]
\theoremstyle{remark}
\newtheorem*{rem}{Remark}
\newcommand\ledot{\mathrel{\ensurestackMath{%
  \stackengine{-.5ex}{\lessdot}{-}{U}{c}{F}{F}{S}}}}
\newcommand{\R}{\mathbb{R}} 
\newcommand{\Z}{\mathbb{Z}} 
\newcommand{\N}{\mathbb{N}} 
\newcommand{\T}{\mathbb{T}} 
\newcommand{\A}{\mathcal{A}} 
\newcommand{\M}{\mathcal{M}} 
\newcommand{\I}{\mathcal{I}}
\renewcommand{\Vec}{\mathbf}
\renewcommand{\phi}{\varphi}
\DeclareMathOperator{\codim}{\mathrm{codim}}
\DeclareMathOperator{\Int}{\mathrm{Int}}
\begin{document}
\singlespacing

\maketitle

\begin{abstract}
    A toric hyperplane is the preimage of a point $x \in S^1$ of a continuous surjective group homomorphism $\theta: \T^n \to S^1$. A finite hyperplane arrangement is a finite collection of such hyperplanes. In this paper, we study the combinatorial properties of finite hyperplane arrangements on $\T^n$ which are spanning and in general position. Specifically, we describe the symmetry of $f$-vectors arising in such arrangements and a few applications of the result to count configurations of hyperplanes.
\end{abstract}

\section{Introduction}
\subsection{Hyperplane arrangements}
Hyperplane arrangements in the setting of $\R^n$ have a rich history, with \cite{orlik1992arrangements} serving as a standard reference. In recent years, an alternative setting of $\T^n$ has been studied more prominently, from combinatorial, algebraic, and topological point of views; see \cite{Ehrenborg_2009,corradodeconcini_procesi_2005} for examples. Some applications of the theory of toric arrangements to toric varieties have been presented in \cite{hicks_2021,hanlon2023resolutions}. 

In this paper, we study finite hyperplane arrangements on $\T^n$ predominantly from the combinatorial perspective. We take the foundation for real hyperplane arrangements laid out in \cite{stanley_2015} and adapt it to the toric case. The motivation of the paper is to enumerate the toric arrangement of hyperplanes with fixed normal vectors under an equivalence relation. A natural topological invariant to study with hyperplane arrangements are the flats arising from arrangements. The enumerative information of $k$-flats can be encoded in an \emph{$f$-vector}. We prove the following relation between the flats of arrangements in general position.

\begin{unlabeledthm}[\cref{prop:facetsduality}]
    For a spanning toric arrangement $(\A,\T^n)$ in general position, the $f$-vector $\Vec{f}(\A)$ is symmetric.
\end{unlabeledthm}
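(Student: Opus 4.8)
The plan is to prove the symmetry $f_k(\A) = f_{n-k}(\A)$ by a deletion–restriction argument, organized as a double induction first on the dimension $n$ and then on the number $m = |\A|$ of hyperplanes (which for a spanning arrangement satisfies $m \ge n$). Fix $H \in \A$ and write $\A' = \A \setminus \{H\}$ for the deletion and $\A'' = \A^H$ for the restriction of the remaining hyperplanes to $H$. The engine of the argument is the face-counting recursion
\[ f_k(\A) = f_k(\A') + f_k(\A'') + f_{k-1}(\A''), \]
valid for every $k$ (with $f_{-1} = 0$ and $f_j(\A'') = 0$ for $j \ge n$, since $\A''$ lives on the $(n-1)$-dimensional $H$). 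Granting this recursion together with the inductive symmetry of $\A'$ on $\T^n$ and of $\A''$ on $\T^{n-1}$, symmetry of $\A$ is immediate: the first terms match because $f_k(\A') = f_{n-k}(\A')$, and for the restriction one uses $f_j(\A'') = f_{(n-1)-j}(\A'')$ to verify that the sum $f_k(\A'') + f_{k-1}(\A'')$ is invariant under $k \mapsto n-k$ (the two summands are simply exchanged).

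To run this induction I first need that deletion and restriction stay inside the class of spanning arrangements in general position. When $m > n$ the normal vectors are linearly dependent, so some $H$ is non-essential and $\A'$ remains spanning and in general position with $m - 1 \ge n$ hyperplanes on $\T^n$; this is the step that decreases $m$. The restriction $\A''$ consists of the traces on $H$ of the other $m-1$ hyperplanes, each of which is a hyperplane of $H$ by general position. Here $H$ is itself a disjoint union of $(n-1)$-subtori, its number of components being the index $[\,(\Q v_H)\cap\Z^n : \Z v_H\,]$ of the normal $v_H$; on each component the traces form a spanning general-position arrangement, so $\A''$ is a disjoint union of copies of a spanning general-position arrangement on $\T^{n-1}$, to which the outer (dimension) induction hypothesis applies. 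I will record separately that passing to a disjoint union merely scales the whole vector $\Vec{f}$ by the number of components and hence preserves symmetry.

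The base case $m = n$ is an arrangement whose $n$ normals form a basis of a finite-index sublattice of $\Z^n$ of index $e$; a direct count gives $f_k(\A) = e\binom{n}{k}$, which is symmetric because $\binom{n}{k} = \binom{n}{n-k}$ (the factor $e$ is the number of points of $\bigcap_{H\in\A} H$, spread uniformly across dimensions by multiplicativity of lattice indices). The step I expect to be the main obstacle is the justification of the recursion itself on $\T^n$: unlike in $\R^n$, a hyperplane could a priori meet a single face of $\A'$ in a disconnected set, or cut it into more than two pieces, so I must show that in a spanning arrangement in general position each face of $\A'$ is the homeomorphic image of a single convex face of the periodic real arrangement obtained by pulling back along the universal cover $\R^n \to \T^n$. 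Granting this \emph{no-wrapping} property, $H$ meets each face it crosses in a connected, separating set, splitting it into exactly two faces of the same dimension and producing one new face of $\A''$ one dimension lower; this yields precisely the three terms of the recursion. Proving that spanning together with general position forces this convexity of faces is where the real work of the argument lies.
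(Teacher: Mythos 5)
Your plan is essentially the paper's own proof: the same double induction on dimension and on $|\A|$, the same base case $f_k=|\det\Theta|\binom{n}{k}$ for $m=n$, and the same generalised deletion--restriction recursion $f_k(\A)=f_k(\A')+f_k(\A'')+f_{k-1}(\A'')$ with the two restriction terms exchanged under $k\mapsto n-k$. The ``no-wrapping'' convexity property you correctly flag as the real work is exactly what the paper establishes (via a covering-space argument showing all flats of a spanning general-position arrangement are simply connected) before proving the recursion.
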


In other words, the number of $k$-flats in a reasonable toric arrangement $\A$ is equal to the number of $(n-k)$-flats of $\A$.

We also present a few applications of the theorem above. We say that two toric arrangements $\A_1$ and $\A_2$ of the same set of hyperplanes are \emph{equivalent}, $\A_1 \sim \A_2$, if we can continuously translate a subset of hyperplanes in $\A_1$ to $\A_2$ without ever creating a triple intersection. 

\begin{unlabeledthm}[\cref{thm:upperboundonarrangements}, \cref{lem:determinatgivesregions}, \cref{cor:regionsofspanningarrangements}]
    Let $\A = \{H_1,H_2,\ldots,H_m\}$ be a set of $2$-dimensional toric hyperplanes given by normal vectors $\Vec{a_1},\Vec{a_2},\ldots,\Vec{a_m}$. Let $\M$ denote the space of all distinct arrangements $(\A,\T^n)$. Then we can construct a space $(\M',\T^{m-2}) \hookleftarrow \M/\sim$ which parametrizes translation of the hyperplanes. The $(m-2)$-flats of the parameter space $\M'$ correspond to arrangements of the $m$ lines in general position. We provide an upper bound on the number of arrangements later.
\end{unlabeledthm}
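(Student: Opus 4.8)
The plan is to treat the three cited results as a single pipeline: first build the parameter space, then identify its top cells with equivalence classes of arrangements, and finally count. Throughout I take $n=2$, so the $H_i$ are lines on $\T^2$ with normals $\Vec{a_1},\dots,\Vec{a_m}\in\Z^2$. First I would record that a line $H_i$ with normal $\Vec{a_i}$ is determined by a single offset $c_i\in S^1$ via $H_i=\{x\in\T^2:\langle\Vec{a_i},x\rangle=c_i\}$, so the set of all positioned arrangements with these normals is the torus $(S^1)^m=\T^m$. A global translation by $v\in\T^2$ sends $c_i\mapsto c_i+\langle\Vec{a_i},v\rangle$, i.e. it is the image of the map $\T^2\to\T^m$, $v\mapsto(\langle\Vec{a_1},v\rangle,\dots,\langle\Vec{a_m},v\rangle)$. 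Since $\A$ is spanning, the $\Vec{a_i}$ span $\R^2$, this map has $2$-dimensional image, and the quotient is $\T^m/\T^2\cong\T^{m-2}$; this is the parameter torus $\M'$. Because a global translation never changes the combinatorial type, $\M/\!\sim$ descends into this quotient, which gives the claimed embedding $\M/\!\sim\,\hookrightarrow\M'$.

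Next I would locate the non-generic locus. An arrangement of lines with distinct normals fails to be in general position exactly when three of the lines $H_i,H_j,H_k$ share a point. For a fixed triple, $H_i\cap H_j$ is a finite set of size $|\det(\Vec{a_i},\Vec{a_j})|$, and requiring $H_k$ to pass through one of these points is a single $S^1$-linear equation in $(c_i,c_j,c_k)$; thus the degenerate configurations for that triple form a union of codimension-one subtori, i.e. a toric hyperplane $W_{ijk}\subset\T^m$. Each such condition is translation invariant, so the $W_{ijk}$ descend to toric hyperplanes in $\M'=\T^{m-2}$ and their union is the wall set of a toric arrangement on $\M'$. I would then argue that two general-position arrangements are $\sim$-equivalent if and only if they lie in the same connected component of the complement of $\bigcup W_{ijk}$: a continuous translation avoiding triple points is precisely a path in $\M'$ missing every wall. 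Hence the chambers, i.e. the top-dimensional $(m-2)$-faces of $\M'$, are in bijection with the general-position classes in $\M/\!\sim$, which is the asserted correspondence.

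For the enumerative part I would first treat the base arrangement on $\T^2$. By transversality each vertex is the crossing of exactly two lines, so $f_0=\sum_{i<j}|\det(\Vec{a_i},\Vec{a_j})|$; \cref{prop:facetsduality} then gives $f_2=f_0$, while $\chi(\T^2)=0$ together with $4$-valence forces $f_1=2f_0$. This identifies the region count with the determinant sum (\cref{lem:determinatgivesregions}, \cref{cor:regionsofspanningarrangements}). To bound the number of equivalence classes I would count chambers of the wall arrangement on $\M'$: the number of regions of any toric arrangement is at most that of a generic one supported on the same toric hyperplanes, and the generic region count is again a sum over size-$(m-2)$ subsets of products of the wall determinants. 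Bounding each determinant and the number of walls (at most $\binom{m}{3}$ triples, each with multiplicity governed by $|\det(\Vec{a_i},\Vec{a_j})|$) yields the stated upper bound.

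The main obstacle, and the step I would spend the most care on, is the equivalence "$\sim\,=\,$ same chamber": I must check that the triple-point locus is genuinely a finite union of codimension-one subtori with no lower-dimensional surprises, that a generic arrangement lies in the open complement, and that the informal "translate without creating a triple intersection" coincides exactly with path-connectivity in $\M'\setminus\bigcup W_{ijk}$ — in particular that simultaneous translations of several lines are captured by paths in the quotient. A secondary difficulty is making the count clean, since the wall arrangement on $\M'$ need not be generic and its walls carry multiplicities from the determinants $|\det(\Vec{a_i},\Vec{a_j})|$; controlling these multiplicities is what turns the exact determinant count into a usable upper bound.
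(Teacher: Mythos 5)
Your pipeline is essentially the paper's: you parametrize arrangements by their intercept vector in $(S^1)^m$, identify the failure of general position for each spanning triple $\{H_i,H_j,H_k\}$ with a codimension-one toric wall (the paper computes its normal explicitly as the cross product $\Vec{n_{ijk}}$ of the two columns of the $3\times 2$ matrix of normals, via the vanishing of $\det(\Vec{a_1}\mid\Vec{a_2}\mid\Vec{b})$), assemble these into a wall arrangement whose chambers are the candidate equivalence classes, and then bound the number of chambers by the determinant sum of \cref{cor:regionsofspanningarrangements} together with the symmetry $f_0=f_n$ of \cref{prop:facetsduality}, conceding only an upper bound because the wall arrangement need not be in general position. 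The one place where you genuinely diverge is the treatment of the global $\T^2$-translation symmetry: you quotient $\T^m$ by the image of $v\mapsto(\langle\Vec{a_i},v\rangle)_i$ to get $\M'\cong\T^{m-2}$ directly, whereas the paper takes a slice, fixing the intercepts $b_g=b_h=0$ of a chosen pair and deleting the corresponding columns of $\tilde\Theta$. The slice meets each translation orbit in $|\det(\Vec{a_g}\mid\Vec{a_h})|$ points, which is exactly why \cref{thm:upperboundonarrangements} carries the prefactor $1/|\det(\Vec{a_g}\mid\Vec{a_h})|$ and why the paper chooses $g,h$ to minimize that determinant; your quotient absorbs this finite ambiguity automatically, at the cost of making the induced wall arrangement on the quotient torus slightly less explicit (the paper's $\rho_{gh}(\tilde\Theta)$ is a concrete integer matrix one can feed into the determinant formula). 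Both routes land on the same bound; the paper's is more computational, yours is cleaner conceptually. Your flagged worry about whether $\sim$-equivalence coincides with lying in a common chamber is fair, but the paper does not close that gap either — it is treated at the same informal level there.
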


\subsection{Outline} 
The paper is structured as follows. After introduction, we list the main results of this paper on a toy example of toric hyperplane arrangements on $\T^2$ in \cref{sec:sectiontwo}. We then introduce real hyperplane arrangements and adjust the definitions for toric hyperplane arrangements in \cref{sec:sectionthree}. Next, in \cref{sec:sectionfour}, we also state and generalise the deletion-restriction relation to $n$-dimensional tori and $k$-dimensional flats for arrangements in general position in \cref{lem:deletionrestriction}. The generalised deletion-restriction then allows us to show \cref{prop:facetsduality}, stating that the number of $k$-flats is equal to the number of $(n-k)$-flats on arrangements on $\T^n$, for which we list a few applications. One of the applications of the theorem is counting the connected components of the complement of a toric hyperplane arrangement as shown in \cref{sec:sectionfive}. We end the paper with \cref{sec:sectionsix}, where we focus on classifying hyperplane arrangements on $\T^2$ under the following equivalence relation: we will say that two arrangements of the same set of hyperplanes are equivalent if and only if we can continuously translate a subset of hyperplanes without creating a triple intersection of one arrangement to the other arrangement. We show a few examples of constructing geometric spaces classifying hyperplane arrangements on $\T^2$ and give an upper bound for the number of arrangements for a set of hyperplanes with fixed normal vectors.

\subsection*{Acknowledgment}
I would like to thank Jeff Hicks for suggesting the topic and supervising the undergraduate project. This paper also benefitted greatly from Sophie Bleau's comments on early drafts. This work was partially supported by Simons Investigator Award No. 929034 through Nick Sheridan.

\section{The two-dimensional case}
\label{sec:sectiontwo}

To concretely present the results, we begin by looking at the case of $\T^2$ which provides intuition for the general case. We sometimes refer to hyperplanes as \emph{lines} when working on $\T^2$. We will provide proofs to general results later.

\subsection{Symmetry of \texorpdfstring{$f$}{Symmetry of f-vectors}-vectors}

Consider the arrangement $(\A,\T^2)$ of hyperplanes $H_1$ and $H_2$ given by the normal vectors $\langle 1,2 \rangle$ and $\langle 1,-2 \rangle$, respectively, shown in \cref{fig:firstexampleoflinearrangement}. The arrangement $\A$ comes with vertices, edges and regions. Denote the number of vertices by $f_0$, the number of edges by $f_1$, and the number of regions by $f_2$. Then the $f$-vector $\Vec{f}(\A)$ for $\A$ is the vector with the $i$-th entry being $f_i$. In our example, we have $\Vec{f}(\A) = (4,8,4)$, in particular, $f_0 = f_2$ for the arrangement $\A$. In fact, for a “reasonable” (in general position, more precisely) line arrangement, $\A$ on $\T^2$, the number of regions $f_2$ is equal to the number of vertices $f_0$.

\begin{figure}[httt]
    \centering
    \begin{tikzpicture}[scale=.7]
        \tikzstyle{invisible} = [outer sep=0,inner sep=0,minimum size=0]
    \tikzstyle{int} = [circle,fill,outer sep=2,inner sep=2,minimum size=0]

        \begin{scope}[thick,decoration={
        markings,
        mark=at position 0.5 with {\arrow{>}}}
        ] 
        \draw[postaction={decorate}] (-4,0)--(2,0);
        \draw[postaction={decorate}] (2,6)--(2,0) node[invisible] (v3) {};
        \draw[postaction={decorate}] (-4,6)--(2,6) node[invisible] (v2) {};
        \draw[postaction={decorate}] (-4,6) node[invisible] (v6) {}--(-4,0) node[invisible] (v1) {};
    \end{scope}

    \draw[blue] (v1) -- (2,3) node[invisible] (v5) {};
    \draw[blue] (-4,3) node[invisible] (v4) {} -- (v2);
    \draw[orange] (v3) -- (v4);
    \draw[orange] (v5) -- (v6);
    \node[int] at (-1,1.5) {};
    \node[int] at (-1,4.5) {};
    \node[int] at (-4,0) {};
    \node[int] at (-4,3) {};
    
    \node at (-3.0997,5.1261) {$H_1$};
    \node at (0.6736,1.9513) {$H_2$};
    \node at (-3.6981,0.5313) {$V_1$};
    \node at (-1,2) {$V_2$};
    \node at (-3.12,3.0164) {$V_3$};
    \node at (-1,5) {$V_4$};
    \draw (-0.9789,6.0718) -- (-0.9096,5.997) -- (-0.9964,5.909);
    \draw (-0.9969,0.0809) -- (-0.9306,0.0076) -- (-1.0076,-0.075);
    \end{tikzpicture}
    \caption{A toric arrangement on $\T^2$ given by hyperplanes $H_1 = \langle 1,2 \rangle$ and $H_2 = \langle 1,-2\rangle$.}
    \label{fig:firstexampleoflinearrangement}
\end{figure}
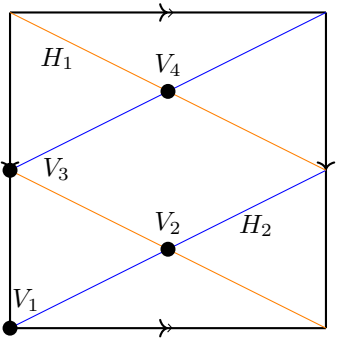

The claim can be proven using basic graph theory. For any arrangement of lines $\A$, let $f_0(\A)$ be the number of vertices in $\A$ and $f_1(\A)$ be the number of edges in $\A$. By a standard graph-theoretic result, $$\sum _{i = 1} ^n \deg (v_i) = 2f_1(\A).$$ Since $\A$ is in general position, any vertex is an intersection of exactly two lines, so the degree of any vertex is 4. Thus, we have $4f_0(\A) = 2f_1(\A)$, implying $2f_0(\A) = f_1(\A)$. Since the Euler characteristic of $\T^2$ is given by $f_0(\A) - f_1(\A) + f_2(\A) = 0$, substituting our previous result yields $f_0(\A) = f_2(\A)$ as required. However, such reasoning explicitly relies on the Euler characteristic, which is only applicable to arrangements with contractible faces. Hence, this reasoning is not applicable to cases with non-simply connected regions, like arrangements with all lines parallel. We also need to assume, for this argument, that our arrangements are reasonably nice in a sense that not all hyperplanes are parallel and there are no triple intersections.

\subsection{Counting regions of arrangements}
The symmetry of $f$-vectors can be utilised to count the number of regions of an arrangement $\A$ from the normal vectors of hyperplanes. Let $\Vec{a_i}$ denote the normal vector of a line $H_i$. A standard result in algebraic topology states that the number of intersections arising between two lines $H_1$ and $H_2$ is the absolute value of the determinant
\begin{align*}
    f_0(\A) = |\det (\Vec{a_1}| \Vec{a_2})|.
\end{align*}

Generalising to $m > n$ lines, and assuming the arrangement is still in general position, we have that the number of intersections is given by
\[ f_0(\A) = \sum _{H_i, H_j \in \mathcal{A}} |\det( \Vec{a_i} | \Vec{a_j})| . \]
When our arrangement is in general position, we use the symmetry $f_0(\A) = f_2(\A)$, so we only need to count the number of line intersections. Using the topological fact that $f_0(\A) - f_1(\A) + f_2(\A) = 0$, we also obtain $f_1(\A) = 2f_0(\A)$. Therefore, we have reduced the problem of counting the regions $f_2(\A)$ of toric arrangements to counting intersections between the hyperplanes.

\subsection{Counting arrangements under equivalence}

We take two line arrangements to be equivalent if and only if we can continuously translate a subset of lines in one arrangement to get the other arrangement without creating a triple intersection at any point. Denote the equivalence of two arrangements by $\sim$. The question is: how many distinct line arrangements are there given a set of lines with fixed slope? 

Suppose that we have an arrangement $\A^\prime$ on $\T^2$ such that $|\A^\prime| = 2$. There is only one such arrangement, as there is no possibility of a triple intersection when translating either of the hyperplanes. 

We give an intuitive way of counting the maximum number of distinct arrangements of 3 lines. We consider adding a new hyperplane $H_3$ to $\A^\prime$. Since we want to enumerate distinct arrangements subject to $\sim$, we look for arrangements containing triple intersections. We are able to give an upper bound for the possible triple intersections by looking at complete (i.e. double) intersections of the original arrangement $\A^\prime$. Through each complete intersection, we place the new hyperplane $H_3$, so there will be at most $f_0(\A)$ copies of $H_3$. Note that $H_3$ may pass through more than one complete intersection, which disregards the symmetry of $\T^2$, hence we only get an upper bound. However, to obtain a better bound, we need to consider this process with every hyperplane in $\A$. Using the global translation of the torus, we may fix an intersection of a pair of hyperplanes $H_i$ and $H_j$ at the origin. Let $\A_{H_i}$ denote the \emph{deletion} of a line $H_i$ from $\A$, symbolically $\A_{H_i}= \A \setminus H_i$. Therefore, we find that the number of distinct toric arrangements of the three hyperplanes is at most $$\min_{H_i \in \A} (f_0(\A' _{H_i})).$$
Thus, we have established an upper bound for the number of distinct toric arrangements of three lines with fixed normal vectors on $\T^2$. However, we will later see that there is only one arrangement of three lines so that $\A$ is in general position, once we factor in the symmetries of $\T^2$. Hence, the bound is not the tightest, but we address this in the last section.

\section{Background}
\label{sec:sectionthree}

\begin{defn}
By an \emph{\(n\)-torus} $\T^n$, we understand the topological group \(\mathbb{R}^n/\mathbb{Z}^n\).
\end{defn}

There are plenty of ways we can think about tori, and we will use them interchangeably throughout the paper. For example, the $n$-dimensional torus $\T^n$ can be represented by an $n$-dimensional hypercube with the opposing faces identified. We also have $$\T^n = \underbrace{S^1 \times S^1 \times \ldots S^1}_{n}.$$ Another way to look at $\T^n$ is to consider the quotient map $$\pi: \mathbb{R}^n \to \faktor{\mathbb{R}^n}{\sim} \cong \mathbb{T}^n,$$ where $(u_1, u_2, \ldots, u_n) \sim (v_1, v_2,\ldots, v_n)$ if and only if for all $i \in \{1,2,\ldots, n\}$ we have $u_i - v_i \in \Z$, i.e. $(u_1 - v_1, u_2 - v_2, \ldots, u_n - v_n) \in \Z^n$.

Considering that $\R^n$ is a covering space of $\T^n$, many structures in the affine space $\R^n$ have analogues in the torus $\T^n$. For example, we adapt the classical definition of a linear hyperplane to $\T^n$.

\begin{defn}
\label{defn:hyperplaneontorus}
Let \(\theta : \mathbb{T}^n \to S^{1}\) be a surjective group homomorphism. Then we may define a \emph{toric hyperplane} $H$ on $\T^n$ as the kernel of $\theta$, $\ker (\theta)$.
\end{defn}

It follows by the isomorphism theorems that because $\T^n/\ker(\theta) \cong S^1$, we have $\dim(\T^n/\ker(\theta)) = \dim(S^1)$ which subsequently implies $\dim(\ker(\theta)) = n-1$. By analogy to the setting of $\R^n$ a real hyperplane is the kernel of a surjective map $\tilde\theta:\R^n \to \R$,and we have by rank-nullity that $\ker(\tilde\theta) = n-1$. Hence, the definition of a toric hyperplane is consistent with the classical real case.

Fix a basis of $\R^n$. Then any surjective continuous group homomorphism $\theta: \T^n \to S^1$ can be interpreted as a map 
\begin{align*}
    \Vec{x} \mapsto \langle a_1, a_2, \ldots, a_n \rangle\cdot \Vec{x} = a_1x_1 + a_2x_2 + \ldots + a_nx_n,
\end{align*}

where $a_1, a_2, \ldots, a_n$ are integer scalars and at least one $a_i$ is non-zero. Following \cref{defn:affinehyperplaneontorus}, a toric hyperplane $H$ is given by a set of all vectors in $\T^n$ such that $a_1x_1 + a_2x_2 + \ldots + a_nx_n = b$ for a fixed arbitrary constant $b \in \R$. Hence, we can characterise hyperplanes by their normal vector $\langle a_1, a_2, \ldots, a_n \rangle = \Vec{n} \in \Z^n$ and their intercept $b$.

\begin{defn}
\label{defn:affinehyperplaneontorus}
     Let \(\theta:\mathbb{T}^n \to S^{1}\) be a non-zero surjective group homomorphism. Then the preimage $\theta^{-1}(x)$ of a point $x \in S^1$ defines an \emph{affine toric hyperplane $H$} on $\T^n$. We call $x \in S^1$ the \emph{intercept} of $H$.
\end{defn}

We will refer to affine toric hyperplanes as hyperplanes. A toric hyperplane is inherently a special case of an affine toric hyperplane. Furthermore, the definition of an affine toric hyperplane does not pose any restrictions on the map $\theta$, so it is possible that some hyperplanes may appear with multiplicities when we choose the normal vector entries such that $\gcd(a_1,a_2,\ldots,a_n) > 1$.

\begin{exmp}
Let $(\A,\T^1)$ be an arrangement of one hyperplane $H$ given by the normal vector $\Vec{n} = \langle 5 \rangle$, as shown in \cref{fig:nonsimplehyperplanearrangementexample}. The normal vector $\Vec{n}$ can also be expressed as $5\cdot\langle 1 \rangle$ and as a set $\A$ consists of 5 equally spaced copies of $\langle 1 \rangle$ on $\T^1$.
    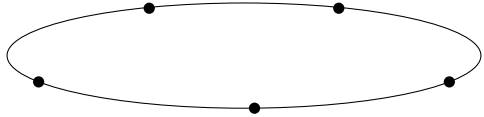
\begin{figure}[ht]
        \centering
        \begin{tikzpicture}[scale=.7]
          \tikzstyle{pt} = [circle,fill,outer sep=1.5,inner sep=1.5,minimum size=1.5]
        \draw  (-4,2.5) ellipse (4.5 and 1);
        \node[pt] at (-7.9,2) {};
        \node[pt] at (-3.8,1.5) {};
        \node[pt] at (-0.1,2) {};
        \node[pt] at (-2.2,3.4) {};
        \node[pt] at (-5.8,3.4) {};
        \end{tikzpicture}
        \caption{An example of an arrangement on $\T^1$ containing a non-simple hyperplane.}
        \label{fig:nonsimplehyperplanearrangementexample}
    \end{figure}
\end{exmp}

This is unlike real hyperplane arrangements for which non-primitive normal vectors just scale the hyperplane without disconnecting it.

\begin{defn}
    Let $H$ be a hyperplane on $\T^n$ defined by normal vectors $\langle a_1, a_2, \ldots, a_n \rangle$. Then we say that $H$ is a \emph{simple hyperplane} if and only if $\gcd(a_1,a_2, \ldots,a_n) = 1$.
\end{defn}

Due to the following proposition, we will assume all hyperplanes to be simple unless otherwise stated.

\begin{prop}
\label{lem:simplehyperplanesareconnected}
    Let $H$ denote a hyperplane on $\T^n$. Then $H$ is connected if and only if $H$ is simple.
\end{prop}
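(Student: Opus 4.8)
The plan is to move to a convenient coordinate system on $\T^n$ via the automorphism induced by a unimodular integer matrix, and to handle the two implications separately, using for the backward direction the factorization of $\theta$ through the $d$-fold covering $z \mapsto dz$ of $S^1$, where $d = \gcd(a_1,\ldots,a_n)$. First I would reduce to the linear case: writing $\theta(\Vec{x}) = \sum_i a_i x_i \pmod 1$, the hyperplane $H = \theta^{-1}(b)$ is a coset of the subgroup $\ker\theta$, and since translation by a fixed element is a self-homeomorphism of $\T^n$, $H$ is connected if and only if $\ker\theta$ is. So it suffices to treat $H = \ker\theta$.

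For the direction ``simple $\Rightarrow$ connected'', suppose $d = \gcd(a_1,\ldots,a_n) = 1$, so $\Vec{a} = (a_1,\ldots,a_n)$ is a primitive vector of $\Z^n$. By the standard unimodular-completion result (a primitive integer vector extends to a $\Z$-basis of $\Z^n$; equivalently, via Smith normal form), there is a matrix $M \in \mathrm{GL}_n(\Z)$ whose first row is $\Vec{a}$. Since both $M$ and $M^{-1}$ have integer entries, $M$ descends to an automorphism $\bar{M}$ of $\T^n = \R^n/\Z^n$, and by construction $\theta = \pi_1 \circ \bar{M}$, where $\pi_1 : \T^n \to S^1$ is projection onto the first coordinate. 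Hence $\bar{M}$ restricts to a homeomorphism from $\ker\theta$ onto $\ker\pi_1 = \{0\}\times\T^{n-1} \cong \T^{n-1}$, which is connected.

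For the converse I would prove the contrapositive: if $d > 1$ then $H$ is disconnected. Write $a_i = d b_i$ with $\gcd(b_1,\ldots,b_n) = 1$, and set $\psi(\Vec{x}) = \sum_i b_i x_i \pmod 1$, so that $\theta = m_d \circ \psi$ where $m_d : S^1 \to S^1$, $m_d(z) = dz$, is the $d$-fold covering map. The fibre $m_d^{-1}(b)$ consists of $d$ distinct points $t_0,\ldots,t_{d-1} \in S^1$, whence $H = \ker\theta = \bigsqcup_{k=0}^{d-1}\psi^{-1}(t_k)$. Each $\psi^{-1}(t_k)$ is nonempty since $\psi$ is surjective, and choosing for each $k$ an open neighbourhood $U_k$ of $t_k$ in $S^1$ disjoint from the other $t_j$, we get $\psi^{-1}(U_k) \cap H = \psi^{-1}(t_k)$; this exhibits each piece as open, and hence (there being finitely many) closed, in $H$. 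As $d \geq 2$, $H$ is a disjoint union of at least two nonempty open sets and is therefore disconnected.

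The main obstacle — really the only nontrivial ingredient — is the unimodular completion used in the forward direction, namely that a primitive vector of $\Z^n$ occurs as a row of some $M \in \mathrm{GL}_n(\Z)$; this is exactly what makes the torus change-of-coordinates available and collapses $\ker\theta$ to a standard subtorus. Everything else is bookkeeping with the covering map $m_d$ and the observation that preimages of isolated points are open and closed in $H$.
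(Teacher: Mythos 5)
Your proof is correct, but the forward direction takes a genuinely different route from the paper's. For ``simple $\Rightarrow$ connected'' the paper gives a hands-on path-connectedness argument: it lifts two points of $H$ to $\R^n$, uses B\'ezout to produce $\Vec{v}$ with $\tilde\theta(\Vec{v})=1$, translates one lift by a suitable integer multiple $k\Vec{v}$ so that both lifts land on the same affine hyperplane in $\pi^{-1}(H)$, and pushes the straight-line segment back down to $\T^n$. You instead complete the primitive vector $\Vec{a}$ to a matrix $M\in\mathrm{GL}_n(\Z)$ and use the induced automorphism of $\T^n$ to carry $\ker\theta$ homeomorphically onto the standard subtorus $\{0\}\times\T^{n-1}$. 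The number-theoretic input is the same (primitivity of $\Vec{a}$, via B\'ezout in one case and unimodular completion in the other), but your version is structurally cleaner and yields strictly more: it identifies $H$ up to homeomorphism as $\T^{n-1}$, whereas the paper only extracts path-connectedness; it also meshes with the $\mathrm{GL}_n(\Z)$ change-of-basis lemma the paper proves later for other purposes. For the converse both arguments factor $\theta$ through the degree-$d$ map $m_d$ on $S^1$; here your write-up is actually more careful than the paper's, which asserts that $H=\theta'^{-1}(\theta_m^{-1}(c))$ is disconnected merely because $\theta_m^{-1}(c)$ is, while you supply the missing step by pulling back disjoint neighbourhoods of the $d$ fibre points to exhibit each piece $\psi^{-1}(t_k)$ as nonempty, open, and closed in $H$. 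Your preliminary reduction from an affine hyperplane to $\ker\theta$ by translation is also a useful clarification that the paper handles only implicitly.
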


\begin{proof}
    Suppose $H$ is not simple, so we assume $\gcd(a_1,a_2,\ldots,a_n) = m$ with $m > 1$. The hyperplane $H$ is defined as the kernel of the map $\theta: \T^n \to S^1;\; \Vec{x} \mapsto a_1x_1 + a_2x_2 + \ldots + a_nx_n$. We may rewrite the image of $\Vec{x}$ under $\theta$ as $m(b_1x_1 + b_2x_2 + \ldots + b_nx_n).$ Therefore we can consider maps $\theta^\prime: \T^n \to S^1$ given by $\Vec{x} \mapsto b_1x_1 + b_2x_2 + \ldots + b_nx_n$, and a map $\theta_m: S^1 \to S^1$, multiplication by $m$, sending $\Vec{x}$ to $m\Vec{x}$. We factor $\theta$ as in the following commutative diagram.
\[\begin{tikzcd}
    {\T^n} && {S^1} \\
    & {S^1}
    \arrow["\theta", from=1-1, to=1-3]
    \arrow["{\theta^\prime}"', from=1-1, to=2-2]
    \arrow["{\theta_m}"', from=2-2, to=1-3]
\end{tikzcd}\]
    Then $H = \theta^{\prime -1}(\theta_m ^{-1}(c))$, for an intercept $c \in S^1$. Since $\theta_m ^{-1}(c)$ is disconnected, it follows that $H$ is disconnected as well.
    
    Conversely, suppose $H$ is simple where $H$ is given by the kernel of the map $$\theta: \T^n \to S^1; \Vec{x} \mapsto \langle a_1, a_2, \ldots, a_n \rangle \cdot \Vec{x}$$ where $\gcd(a_1,a_2,\ldots,a_n) = 1$. Let $\Vec{v} \in H$ be a vector such that $\langle a_1, a_2, \ldots, a_n \rangle \cdot \Vec{v} = 1$, which exists by the Bézout identity. We may take two points of $H$, say $$\Vec{x} = (x_1,x_2,\ldots,x_n), \quad \text{and} \quad \Vec{y} = (y_1,y_2,\ldots,y_n).$$ By definition of $H$, we have $\theta(\Vec{x}) = 0$ and $\theta(\Vec{y}) = 0$. Denote by $\pi$ the covering space $\pi: \R^n \to \T^n$ and let $\pi^{-1}(\Vec{x}), \pi^{-1}(\Vec{y}) \in \R^n$ be the lifts of $\Vec{x}$ and $\Vec{y}$ respectively.

\[\begin{tikzcd}
    {\R^n} && {\R} \\
    \\
    {\T^n} && {S^1}
    \arrow["\theta", from=3-1, to=3-3]
    \arrow["\tilde\theta", from=1-1, to=1-3]
    \arrow["\pi"', from=1-1, to=3-1]
    \arrow["\pi'", from=1-3, to=3-3]
\end{tikzcd}\]
     The diagram above commutes. Fix specific points $\tilde{\Vec{x}} \in \pi^{-1}(\Vec{x}), \tilde{\Vec{y}} \in \pi^{-1}(\Vec{y})$. To show that $H$ is path-connected, and therefore connected, we will show that there exists a choice of a translation to $\Tilde{\Vec{y}} \in \pi^{-1}(\Vec{y})$ and a path $\gamma$ from $\Tilde{\Vec{x}}$ to $\Tilde{\Vec{y}}$ such that $\gamma \subseteq \pi^{-1}(H)$, where $\pi^{-1}(H)$ is a collection of affine hyperplanes in $\R^n$. Let $\tilde{H}$ be a hyperplane in $\R^n$ so that $\pi(\tilde{H}) = H$, and we fix points $\Tilde{\Vec{x}}$ and $\Tilde{\Vec{y_0}}$. We will call $\tilde{H}$ a representative of $\pi^{-1}(H)$. We try a path given by 
     \[\gamma_k(t): [0,1] \to \R^n; \; t \mapsto (1-t)\tilde{\Vec{x}} + t (\tilde{\Vec{y_0}} + k \Vec{v}), \quad \text{for some $k \in \Z$}.\]
     We want to find a choice of an integer $k$ so that $$\tilde{\theta}(\tilde{\Vec{y_0}} + k \Vec{v}) - \tilde{\theta}(\tilde{\Vec{x}}) = 0$$ which will subsequently imply that $\gamma_k \subseteq \tilde{H}$. 
     By assumption, we have $\tilde{\theta}(\Vec{v}) = 1$, so by linearity, $\tilde{\theta}(\tilde{\Vec{y_0}} + k \Vec{v})$ hits every connected component of $\pi^{-1}(H)$, one of which must satisfy the relation $\tilde{\theta}(\tilde{\Vec{y_0}} + k \Vec{v})) - \tilde{\theta}(\tilde{\Vec{x}}) = 0$. Therefore, we have found a unique representative $\Tilde{\Vec{y}} = \Tilde{\Vec{y_0}} + k\Vec{v}$ in $\pi^{-1}(\Vec{y})$. Now suppose that our choice of $k$ satisfies $\tilde{\theta}(\Tilde{\Vec{x}}) - \tilde{\theta}(\Tilde{\Vec{y_0}} + k\Vec{v}) = 0$. It is left to check that $\pi(\gamma_k) \subseteq H$. Because $\pi^{-1}(H)$ can be thought of as the union of integer translates of $\tilde{H}$ in $\R^n$, we have that $\tilde{\theta}(\gamma_k(t)) = \tilde{\theta}(\tilde{\Vec{x}}) \in \Z$. So, $\pi'(\tilde{\theta}(\gamma_k)) = \pi'(\Z) = 0$. This is equivalent to $\theta(\pi(\gamma_k)) = 0$, which implies $\pi(\gamma_k) \in \ker(\theta)$, hence $\pi(\gamma_k) \subseteq H$. 
\end{proof}
We illustrate the idea in the proof above for $\R^2$ in \cref{fig:exampleforpfoflemma21}.
\begin{figure}[h!]
    \centering
\begin{tikzpicture}[scale=.9]
      \tikzstyle{invisible} = [outer sep=0,inner sep=0,minimum size=0]
        \draw [color=gray!20, step=1cm,very thin] (-4.5,-1) grid (5,6.5);
        \draw[gray!0]  (-4.5,6.5) node[invisible] (v1) {} rectangle (5,-1) node[invisible] (v2) {};
        \draw [-latex](-3,-1) node[invisible] (v4) {} -- (-3,6.5);
        \draw [-latex](-4.5,1) -- (5,1) node[invisible] (v3) {};
        \draw (-4.5,2.5) -- (-1,-1);
        \draw (-4.5,4.5) -- (1,-1);
        \draw (3,-1) -- (v1);
        \draw (v2) -- (-2.5,6.5);
        \draw (v3) -- (-0.5,6.5);
        \draw (1.5,6.5) -- (5,3);
        \draw (3.5,6.5) -- (5,5);
        \draw (v4) -- (-4.5,0.5);
        \node[circle,fill,scale=.3] (v6) at (-1,5) {};
        \node[circle,fill,scale=.3] (v5) at (-2,0) {};
        \node at (-0.8,5.3) {$\tilde{\mathbf{x}}$};
        \node at (-2.2,-0.3) {$\tilde{\mathbf{y_0}}$};
        \draw[red,-latex](v5) -- (-1,1);
        \node at (-1.6,0.7) {$\mathbf{v}$};
        \node[circle,fill,scale=.3] at (-1,1) {};
        \node[circle,fill,scale=.3] at (0,2) {};
        \node[circle,fill,scale=.3] (v7) at (1,3) {};
        \node at (-0.8,1.5) {$\tilde{\mathbf{y_0}} + \mathbf{v}$};
        \node at (0.5,2.3) {$\tilde{\mathbf{y_0}} + 2\mathbf{v}$};
        \node at (1.7,3.1) {$\tilde{\mathbf{y_0}} + 3\mathbf{v}$};
        \draw [-latex,blue] (v6) edge (v7);
        \node at (0.4,4.1) {$\gamma_3$};
    \end{tikzpicture}
    \caption{The lift of a hyperplane $H$ of $\T^2$ into $\R^2$.}
    \label{fig:exampleforpfoflemma21}
\end{figure}
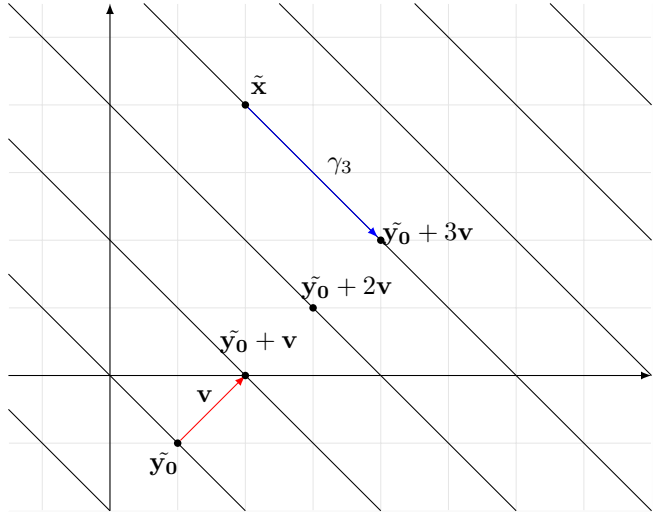

In the classical theory of hyperplane arrangements, it is possible to define a hyperplane arrangement $(\A,V)$ as a set of hyperplanes $\A$ in an ambient $k$-vector space $V$. Although $\T^n$ is not a vector space, we rely on the fact that the covering space $\R^n$ is a vector space throughout the paper. We now define a toric hyperplane arrangement.
\begin{defn}
    A \emph{toric hyperplane arrangement} $(\A,\T^n)$ is a finite set $\A$ of affine toric hyperplanes in the ambient space $\T^n$.
\end{defn}

By a standard abuse of notation, we refer to $\A$ as a toric hyperplane arrangement or simply hyperplane arrangement when the dimension of the ambient space is clear or does not need to be specified. We will also denote the number of hyperplanes in $\A$ by $|\A|$.

Let an arrangement $(\A,\T^n)$ consist of hyperplanes given by maps $\theta_1,\theta_2,\ldots, \theta_m$ and intercepts $b_1,b_2\ldots,b_m$. After fixing a basis, denote by $\Theta$ the \emph{system of hyperplanes of $\A$} which we define to be a matrix in $\mathrm{Mat}_{m\times n}(\Z)$. We can encode the information about hyperplanes in $\A$ using a pair of matrices,
\begin{align*}
    (\Theta, \Vec{b}) = \left(
    \begin{pmatrix}
    \leftarrow \Vec{a_1} \rightarrow \\
    \leftarrow \Vec{a_2} \rightarrow \\
    \vdots \\
    \leftarrow \Vec{a_m} \rightarrow
    \end{pmatrix}, \begin{pmatrix}
        b_1\\b_2\\ \vdots \\ b_m
    \end{pmatrix}\right).
\end{align*}

\begin{defn}[\cite{stanley_2015}]
An arrangement \( (\A, \T^n) \) is in general position if 
\begin{align*}
    & \bullet \{ H_1, H_2, \ldots, H_m \} \subseteq \mathcal{A}, m \leq n \colon \quad \dim (H_1 \cap H_2 \cap \ldots \cap H_m) \leq
    n - m,\\
    & \bullet \{ H_1, H_2, \ldots, H_m \} \subseteq \mathcal{A}, m > n \colon \quad H_1 \cap H_2 \cap \ldots \cap H_m = \emptyset .
\end{align*} 

\end{defn}
    
In the case of arrangements on $\T^2$, a hyperplane arrangement in general position roughly means arrangements without triple intersections. Unless otherwise stated, every arrangement will be assumed to be in a general position. We also wish to exclude the analogue of arrangements with all hyperplanes parallel in the case of $\T^2$.

\begin{defn}
    Let $(\A,\T^n)$ be an arrangement for which $|\A| \geq n$. We say that $\A$ is \emph{spanning} if there exists a subset $I \subseteq \A$ such that $|I|=n$ we have $\bigcap_{i \in I} H_i \neq \emptyset$.
\end{defn}

Equivalently, after fixing a basis, we call an arrangement $\A$ spanning if there exists a subset of hyperplanes whose normal vectors span $\R^n$.

\begin{exmp}
    Take the arrangement $(\I,\T^n)$ where $\I = \{H_1,H_2,\ldots,H_n\}$ and each hyperplane $H_i = \ker(\theta_i)$ is defined by the map $$\theta_i: \T^n \to S^1; \; \theta(\Vec{x}) = \Vec{e}_i \cdot \Vec{x}.$$ We call such an arrangement a \emph{standard arrangement} $\I$ since each hyperplane is given by a standard basis vector. The standard arrangement is clearly spanning and in general position, since all $n$ hyperplanes intersect in a single point.
\end{exmp}

\begin{defn}
    A \emph{partially ordered set} (or a poset) is a set $P$ and a relation $\leq$ satisfying the following axioms: for all $x,y,z \in P$
    \begin{enumerate}
        \item (reflexivity) $x \leq x$,
        \item (anti-symmetry) if $x \leq y$ and $y \leq x$, then $x = y$,
        \item (transitivity) if $x \leq y$ and $y \leq z$, then $x \leq z$.
    \end{enumerate}
\end{defn}

\begin{defn}[\cite{stanley_2015}]
    Let $\A$ be an arrangement in a $k$-vector space $V$, and let $L(\A)$ be the set of connected components of non-empty intersections of hyperplanes in $\A$, including $V$ itself as the intersection over the empty set. Define $x \leq y$ in $L(\A)$ if $x \supseteq y$. We call $L(\A)$ the \emph{intersection poset} of $\A$.
\end{defn}

\begin{exmp}
Going back to $\T^n$, the definitions above can be adapted to work in $\T^n$ in a similar way they work in $\R^n$. Let $(\A,\T^2)$ be an arrangement of two hyperplanes $H_1$ and $H_2$ given by vectors $\langle 1,2 \rangle$, and, $\langle 1,-2 \rangle$ respectively, as shown in \cref{fig:firstexampleoflinearrangement}. The arrangement $\A$ is in general position, since $\dim(H_1 \cap H_2) = 0 = \dim(\T^2) - |\A|$. \cref{fig:exmpofanarrangementandaposet} shows the intersection poset $L(\A)$ of the arrangement $\A$. Note that the maximum element $\emptyset$, the intersection over the empty set, is usually omitted from $L(\A)$.

\begin{figure}[h!]
    \centering
    \[\begin{tikzcd}
    	{V_1} && {V_2} && {V_3} && {V_4} \\
    	& {H_1} &&&& {H_2} \\
    	&&& {\T^n}
    	\arrow[from=3-4, to=2-2]
    	\arrow[from=3-4, to=2-6]
    	\arrow[from=2-2, to=1-1]
    	\arrow[from=2-6, to=1-1]
    	\arrow[from=2-2, to=1-3]
    	\arrow[from=2-6, to=1-3]
    	\arrow[from=2-2, to=1-5]
    	\arrow[from=2-6, to=1-5]
    	\arrow[from=2-2, to=1-7]
    	\arrow[from=2-6, to=1-7]
    \end{tikzcd}\]
    \caption{An arrangement $(\A,\T^2)$ and its intersection poset $L(\A)$.}
    \label{fig:exmpofanarrangementandaposet}
\end{figure}
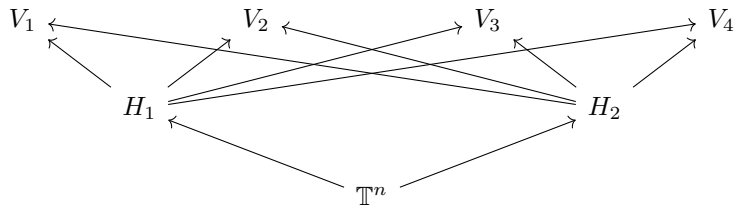
\end{exmp}

\section{Deletion-restriction relations}
\label{sec:sectionfour}

In this section, we discuss the toric analogue of the deletion-restriction relations (\cite{stanley_2015}) and its application to proving the symmetry between the number of $k$-flats and the number of $(n-k)$-flats of an arrangement $(\A,\T^n)$.

Let $I$ be the indexing set for hyperplanes in an arrangement $\A$.
\begin{defn}
\label{defn:regionsofanarrangement}
The complement $\mathbb{T}^n \setminus \bigcup _{i \in I} H_i$ consists of a disjoint union of finitely many connected open regions called \emph{regions of an arrangement} \(\mathcal{A}\), where is not necessarily spanning or in general position. We will denote the set of such connected components $R(\A)$. Set $r(\A) := |R(\A)|$.
\end{defn} 

From the previous few examples, we have seen that regions of an arrangement $\A$ (not necessarily spanning or in general position) can be one of the two following types.

\begin{enumerate}
    \item The region $R_0 \in R(\A)$ is a convex polytope\footnote{Recall that a \emph{convex polytope} is the convex hull of a finite point set $P \subseteq \T^n$.} which implies the fundamental group of $R_0$ is $\pi_1(R_0) \cong \{e\}$.
    \item For a region $R_0 \in R(\A)$, its fundamental group is infinite cyclic, that is, $\pi_1(R_0) \cong \Z^k$ for some $k \in \N$.
\end{enumerate}

This is unlike real hyperplane arrangements, where every region is simply connected. However, the following lemma asserts that when we assume arrangements to be in general position and spanning, it is not possible to have regions with an infinite fundamental group.

\begin{lem}
\label{lem:generalpositionmeansflatsareconvex}
    Suppose $(\A,\T^n)$ is a spanning arrangement in general position with $|\A| \geq n$. Then every element of $R(\A)$ is simply connected.
\end{lem}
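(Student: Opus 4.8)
The plan is to pass to the universal cover $\pi : \R^n \to \T^n$ and reduce the question of simple connectivity of a region to the computation of a point stabilizer under the deck group $\Z^n$. Fix a region $R_0 \in R(\A)$ and let $\tilde{R}_0$ be a connected component of its preimage $\pi^{-1}(R_0)$; this $\tilde{R}_0$ is a region of the lifted arrangement $\pi^{-1}\!\left(\bigcup_i H_i\right)$ in $\R^n$. The components of $\pi^{-1}(R_0)$ form a single $\Z^n$-orbit, and the restriction $\pi|_{\tilde{R}_0}\colon \tilde{R}_0 \to R_0$ is a covering map; so once I show $\tilde{R}_0$ is simply connected it follows that $\tilde{R}_0$ is the universal cover of $R_0$ and hence
\[
\pi_1(R_0) \;\cong\; \mathrm{Stab}_{\Z^n}(\tilde{R}_0) := \{\, g \in \Z^n : g + \tilde{R}_0 = \tilde{R}_0 \,\}.
\]
It then suffices to prove that this stabilizer is trivial.

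First I would record the geometric facts about the lift. Since $\A$ is finite, the lifted arrangement is a locally finite family of affine hyperplanes in $\R^n$, each lift of $H_i$ being a family of parallel hyperplanes with common normal $\Vec{a}_i$. Consequently each region $\tilde{R}_0$, being a connected component of the complement of a locally finite hyperplane family in $\R^n$, is open, and it is an intersection of open half-spaces, hence convex. A convex open set is contractible, so $\tilde{R}_0$ is simply connected, which justifies the covering-space identification above.

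The crux is to show $\mathrm{Stab}_{\Z^n}(\tilde{R}_0) = \{0\}$, and this is exactly where the spanning hypothesis enters. Suppose $g \in \Z^n \setminus \{0\}$ satisfies $g + \tilde{R}_0 = \tilde{R}_0$. By convexity the points $p + kg$ with $k \in \Z$ all lie in $\tilde{R}_0$ for any fixed $p \in \tilde{R}_0$, and taking convex combinations shows the entire line $p + \R g$ is contained in $\tilde{R}_0$. On the other hand, $\tilde{R}_0$ meets no lifted hyperplane, so it lies strictly on one side of every lifted copy of $H_i$; if $\Vec{a}_i \cdot g \ne 0$ then the functional $\Vec{x} \mapsto \Vec{a}_i \cdot \Vec{x}$ would be unbounded along the line $p + \R g$ and so take a value in the level set of some lifted copy of $H_i$, forcing the line to cross it, a contradiction. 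Hence $\Vec{a}_i \cdot g = 0$ for every $i$. Since $\A$ is spanning, some $n$ of the normals $\Vec{a}_i$ span $\R^n$, and $\Vec{a}_i \cdot g = 0$ for all of them forces $g = 0$ — a contradiction. Therefore the stabilizer is trivial, $\pi_1(R_0)$ is trivial, and $R_0$ is simply connected.

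I expect the main obstacle to be the bookkeeping in the covering-space step — verifying that the components of $\pi^{-1}(R_0)$ form a single $\Z^n$-orbit, so that $\pi_1(R_0)$ genuinely coincides with the stabilizer — rather than the closing linear-algebra argument, which is short once the reduction is in place. I note that only the spanning hypothesis is actually used for simple connectivity; general position guarantees the complementary statement that these regions are honest polytopes, but is not needed here.
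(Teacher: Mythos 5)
Your proof is correct, and it takes a genuinely different route from the paper's. The paper picks a spanning subset of $n$ hyperplanes and considers the map $\Theta=(\theta_1,\ldots,\theta_n):\T^n\to(S^1)^n$, which is a covering map; restricting to complements gives a covering of $\prod_i(S^1\setminus\{b_i\})\cong(0,1)^n$, and since the open cube is simply connected each component of the complement maps homeomorphically onto it, with a closing remark that the remaining hyperplanes only subdivide these cells further. You instead go \emph{up} to the universal cover $\R^n\to\T^n$: each lifted region is open and convex (hence is the universal cover of the region below it), and $\pi_1(R_0)$ is identified with the $\Z^n$-stabilizer of a lifted component, which the spanning hypothesis forces to be trivial because any stabilized line $p+\R g$ would make some functional $\Vec{x}\mapsto\Vec{a_i}\cdot\Vec{x}$ unbounded and hence cross a lifted copy of $H_i$. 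The covering-space bookkeeping you flagged as the main obstacle is standard and goes through: the components of $\pi^{-1}(R_0)$ are permuted transitively by $\Z^n$ and the restriction of $\pi$ to one component is a covering of $R_0$. Your version buys a few things: it handles all $|\A|\geq n$ hyperplanes uniformly rather than in two steps, it supplies an honest proof of convexity of the lifted regions, and it makes explicit that only the spanning hypothesis (not general position) is used for simple connectivity. The paper's version is shorter and additionally exhibits each region as (a homeomorphic image of) an open cube, though it leaves the verification that $\Theta$ is a covering map to the reader.
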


\begin{proof}
    Suppose a spanning subset of hyperplanes $H_1, H_2, \ldots, H_n$ is given by maps $\theta_1,\theta_2,\ldots,\theta_n$, respectively. Since we assume $|\A| \geq n$, the set of hyperplanes must span the torus $\T^n$. In other words, \[\Theta: \T^n \xrightarrow{(\theta_1,\;\theta_2,\;\ldots\;,\;\theta_n)} (S^1)^n\] is a covering space because, by assumption, the spanning set of hyperplanes in $\T^n$ is locally homeomorphic to $(S^1)^n$. Then $\Theta$ induces maps $$\Theta_{\ast}: \T^n \setminus \bigcup_{i \leq n} H_i \xrightarrow{(\theta_1,\;\theta_2,\;\ldots\;,\;\theta_n)} (S^1 \setminus \bigcup_{i \leq n} b_i)^n,$$ which is also a covering space. Furthermore, $(S^1 \setminus \bigcup_{i \leq n} b_i)^n \cong I^n$ where $I$ is the unit interval, so the connected components of $\T^n \setminus \bigcup_{i \in I} H_i$ are convex polytopes, since $\pi_1(I^n) \cong \{e\}$. The proof is complete after noting that removing more than $n$ hyperplanes leaves the regions convex.
\end{proof}

Therefore, we can get away with considering simply connected polytopes which arise in $\A$.

\begin{defn}
    Let $(\A, \T^n)$ be an arrangement. If $\mathcal{B} \subseteq \A$ is a subset, then $(\mathcal{B}, \T^n)$ is called a \emph{ subarrangement}. For $X \in L(\A)$ define a subarrangement $\A_X$ of $\A$ by \[\A_X = \{H \in \A \colon X \subseteq H \}. \] Define an arrangement $(\A^X,X)$ in $X$ by \[\A^X = \{ X \cap H \colon H \in \A \setminus \A_X \; \text{and} \; X \cap H \neq \emptyset \}.\] We call $\A^X$ the \emph{restriction} of $\A$ to $X$.
\end{defn}

Choose a hyperplane $H_0 \in \mathcal{A}$. Let $\mathcal{A}' = \mathcal{A} \setminus \{ H_0 \}$ be the \emph{deletion of $H_0$} and we call $\mathcal{A}'' = \A^{H_0}$ to be the \emph{restriction to $H_0$}. The deletion $\A'$ and the restriction $\A''$ are also toric arrangements. We call $(\mathcal{A}, \mathcal{A}', \mathcal{A}'')$ a triple of arrangements with distinguished hyperplane $H_0$. See an example of such a triple on $\T^2$ in \cref{fig:deletionrerstrictionexample}.

\begin{exmp}
    Suppose we have a line arrangement $(\A,\T^2)$, with $|\A|\geq n$. We want to describe the sets of vertices, edges, and faces using intersections of hyperplanes. Firstly, we have from \cref{defn:regionsofanarrangement} the $2$-dimensional objects which we will call $2$-flats. So we define $\Sigma_2(\A)$ to be the set $$ \pi_0 \left( \T^2 \setminus \bigcup_{i \in I} H_i\right).$$ Similarly, we can also describe the $0$-dimensional objects which we will call $0$-flats, $$\Sigma_0(\A) = \bigcup_{i,j \in I} \pi_0(H_i \cap H_j),$$ because a $0$-flats arise as intersections of 2 hyperplanes in $\A$. Finally, the $1$-dimensional objects of $\A$ must be subsets of the hyperplanes themselves. Moreover, we want them to be the edges which we get from the intersections of hyperplanes. Therefore, we write $$\Sigma_1(\A) = \bigcup_{i \in I} \pi_0(H_i \setminus \Sigma_0(\A)).$$
\end{exmp}

\begin{defn}
\label{defn:kflats}
    Let $(\A,\T^n)$ be an arrangement and $I$ an indexing set for hyperplanes in $\A$. We define the set of \emph{$0$-flats} of $\A$ to be 
    \[\Sigma_0(\A) = \bigcup_{\substack{S \subseteq I \\ |S|=n}} \pi_0 \left( \bigcap_{i \in S} H_i \right) . \]
    A $0$-flat is an element of $\Sigma_0$. For $0 < k < n$, we inductively define a \emph{$k$-flat} $\sigma_k$ of an arrangement $\A$ to be an element of the set
    \[\Sigma_k(\A) = \bigcup_{\substack{S \subseteq I \\ |S|=n-k}} \pi_0 \left( \bigcap_{i \in S} H_i \setminus \bigcup_{j < k}\Sigma_j\right).\]
    Finally, we define \[\Sigma_n(\A) = \pi_0\left( \T^n \setminus \bigcup_{i \in I} H_i \right). \]
    Let us denote the number of $k$-flats $|\Sigma_k|$ of an arrangement $\A$ by $f_k(\A)$.
\end{defn}

We take $R(\A)$ to mean the same objects as the $n$-flats $\Sigma_n$ when $\A$ is spanning and in general position by \cref{lem:generalpositionmeansflatsareconvex}. We will also use the convention that there are no $(-1)$-flats when considering any arrangement $(\A,\T^n)$, in other words $f_{-1}(\A) = 0$.\footnote{Note that the convention, such as in \cite{stanley_1991}, is to consider the empty set as a $(-1)$-flat of $\A$, so $f_{-1}(\A) = 1$.}

\begin{defn}
    Let $(\A,\T^n)$ be an arrangement. The \emph{$f$-vector $\Vec{f}(\A)$} of an arrangement $\A$ is defined as \[\Vec{f}(\A) = 
        (f_0(\A), f_1(\A), \ldots, f_n(\A)).\]
\end{defn}

We have previously defined an intersection poset $L(\A)$ for an arrangement $(\A,\T^n)$. Since flats arise as intersections of hyperplanes, we define the flat poset $\mathcal{L}(\A)$.

\begin{defn}[\cite{orlik1992arrangements}]
    The \emph{flat poset} $\mathcal{L}(\A)$ of the arrangement $\A$ is the poset of flats of $\A$ ordered by reverse boundary inclusion, that is, $\tau \ledot \sigma$ if $\overline{\tau} \subseteq \sigma$.
\end{defn}

The flat poset is a sharper invariant than the intersection poset. Define a map $\zeta: \mathcal{L}(\A) \to L(\A)$ by $P \mapsto |P|$, where $|P|$ is the support (i.e., the affine linear span) of $P$. The map $\zeta$ is order-preserving and surjective. It is true that if two arrangements $\A_1, \A_2$ have the same flat poset, then $L(\A_1) = L(\A_2)$. The converse does not hold in general.

\begin{exmp}
    We continue the example of a line arrangement $\A$ with $H_1 = \langle 1,2 \rangle$ and $H_2 = \langle 1,-2\rangle$. The flats of the arrangement are vertices, edges and faces which we will label $V,E,$ and $F$ respectively. Ordering the flats by reverse boundary inclusion, we get the flat poset in \cref{fig:flatposet}. In this example, we leave out the maximal element $\emptyset$. 

    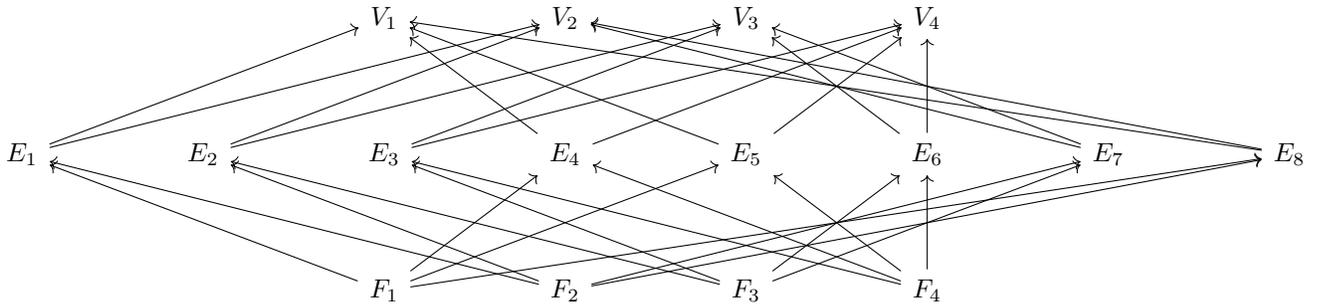
\begin{figure}[ht]
        \centering
        \[\begin{tikzcd}
	&&&& {V_1} && {V_2} && {V_3} && {V_4} \\
	\\
	{E_1} && {E_2} && {E_3} && {E_4} && {E_5} && {E_6} && {E_7} && {E_8} \\
	\\
	&&&& {F_1} && {F_2} && {F_3} && {F_4}
	\arrow[from=5-5, to=3-1]
	\arrow[from=5-5, to=3-7]
	\arrow[from=5-5, to=3-15]
	\arrow[from=5-5, to=3-9]
	\arrow[from=5-7, to=3-1]
	\arrow[from=5-7, to=3-13]
	\arrow[from=5-7, to=3-15]
	\arrow[from=5-7, to=3-3]
	\arrow[from=5-9, to=3-3]
	\arrow[from=5-9, to=3-5]
	\arrow[from=5-9, to=3-11]
	\arrow[from=5-9, to=3-13]
	\arrow[from=5-11, to=3-7]
	\arrow[from=5-11, to=3-9]
	\arrow[from=5-11, to=3-5]
	\arrow[from=5-11, to=3-11]
	\arrow[from=3-1, to=1-5]
	\arrow[from=3-1, to=1-7]
	\arrow[from=3-3, to=1-7]
	\arrow[from=3-3, to=1-9]
	\arrow[from=3-5, to=1-9]
	\arrow[from=3-5, to=1-11]
	\arrow[from=3-7, to=1-11]
	\arrow[from=3-7, to=1-5]
	\arrow[from=3-9, to=1-5]
	\arrow[from=3-9, to=1-11]
	\arrow[from=3-11, to=1-11]
	\arrow[from=3-11, to=1-9]
	\arrow[from=3-13, to=1-9]
	\arrow[from=3-13, to=1-7]
	\arrow[from=3-15, to=1-7]
	\arrow[from=3-15, to=1-5]
    \end{tikzcd}\]
        \caption{An example of a flat poset $\mathcal{L}(\A)$.}
        \label{fig:flatposet}
    \end{figure}
\end{exmp}

Using the definitions above, it is possible to enumerate the flats, thus determining the $f$-vector, of special arrangements. Before doing so, we prove the following lemma.

\begin{lem}
\label{lem:changeofbasisontorus}
Consider the quotient $\pi: \R^n \to \T^n$ and let $\hat{\varphi}$ be a map $\hat{\varphi}: \mathbb{R}^n \to \mathbb{R}^n$ that sends $\boldsymbol{\theta} \in \mathbb{Z}^n$ to $\hat{\varphi} (\boldsymbol{\theta}) \in \mathbb{Z}^n$. Then there exists a map $\varphi: \T^n \to \T^n$ such that the following diagram commutes.
\[\begin{tikzcd}
	{\R^n} && {\R^n} \\
	\\
	{\T^n} && {\T^n}
	\arrow["{\tilde{\theta}}", from=1-1, to=1-3]
	\arrow["\theta"', from=3-1, to=3-3]
	\arrow["\pi"', from=1-1, to=3-1]
	\arrow["\pi", from=1-3, to=3-3]
\end{tikzcd}\]
\end{lem}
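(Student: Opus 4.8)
The plan is to obtain $\varphi$ as the map induced by $\hat{\varphi}$ on the quotient $\T^n = \R^n/\Z^n$, using the universal property of the quotient map $\pi$. Concretely, I would set, for a class $\pi(x) \in \T^n$,
\[
\varphi(\pi(x)) := \pi(\hat{\varphi}(x)).
\]
With this definition the commutativity of the square, namely $\varphi \circ \pi = \pi \circ \hat{\varphi}$ (reading the diagram's horizontal arrows as $\hat{\varphi}$ on top and $\varphi$ on the bottom), is built in and requires no separate argument. The entire content of the lemma is therefore concentrated in a single point: showing that this assignment is \emph{well defined}, i.e.\ independent of the representative $x$ chosen for a given class.

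For well-definedness, suppose $\pi(u) = \pi(v)$. By the description of $\T^n = \R^n/\!\sim$ recorded earlier, this means $u - v \in \Z^n$. I must check that $\pi(\hat{\varphi}(u)) = \pi(\hat{\varphi}(v))$, equivalently that $\hat{\varphi}(u) - \hat{\varphi}(v) \in \Z^n$. Here I would use that $\hat{\varphi}$ is a linear change of basis: by linearity $\hat{\varphi}(u) - \hat{\varphi}(v) = \hat{\varphi}(u - v)$, and since $u - v \in \Z^n$ and $\hat{\varphi}$ carries $\Z^n$ into $\Z^n$ by hypothesis, this difference lies in $\Z^n$, as needed. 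This is the one place where the lattice-preserving hypothesis is genuinely used; without it the composite $\pi \circ \hat{\varphi}$ need not be constant on the fibres of $\pi$, and no descent to the torus exists.

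Having verified that $\pi \circ \hat{\varphi} \colon \R^n \to \T^n$ is constant on the fibres of $\pi$, the universal property of the quotient map supplies a unique continuous $\varphi \colon \T^n \to \T^n$ satisfying $\varphi \circ \pi = \pi \circ \hat{\varphi}$, which is exactly the commuting square claimed. Since $\hat{\varphi}$ is an additive homomorphism of $\R^n$ preserving the subgroup $\Z^n$, the induced $\varphi$ is moreover automatically a continuous group homomorphism; and if $\hat{\varphi}$ is invertible over $\Z^n$ (as one expects of an honest change of basis), applying the same construction to $\hat{\varphi}^{-1}$ and invoking the uniqueness clause shows the two induced maps are mutually inverse, so $\varphi$ is an isomorphism. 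I expect no serious obstacle in the argument itself, which is formal once the hypotheses are pinned down; the only real care needed is to make the standing assumption explicit that $\hat{\varphi}$ is \emph{linear}, and not merely a set map sending $\Z^n$ into $\Z^n$ — the latter condition alone would not force $\pi \circ \hat{\varphi}$ to factor through $\pi$, and the lemma would fail.
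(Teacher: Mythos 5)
Your proposal is correct and follows essentially the same route as the paper: define $\varphi(\pi(x)) = \pi(\hat{\varphi}(x))$ and verify well-definedness by noting that representatives differing by an element of $\Z^n$ are sent to images differing by an element of $\Z^n$, using linearity together with the lattice-preserving hypothesis. Your observation that linearity must be assumed explicitly (the stated hypothesis of a set map preserving $\Z^n$ is too weak on its own) is well taken, and matches what the paper's proof does implicitly by taking $\hat{\varphi}$ to be given by a matrix $A \in \mathrm{GL}_n(\Z)$.
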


\begin{proof}
It suffices to check that the maps are well-defined, that is, the action of $\mathrm{GL_n(\Z)}$ is well-defined. Let $A \in \mathrm{GL}_n(\Z)$ be an $n\times n$ invertible matrix with integer entries, and define $\varphi: \boldsymbol{\theta} \mapsto A\boldsymbol{\theta}$. Let $\left[ \boldsymbol{\theta}\right] \subset \T^n$ be an equivalence class of matrices. Consider two distinct representatives $\boldsymbol{\theta _1}, \boldsymbol{\theta _2} \in \left[ \boldsymbol{\theta}\right]$, so $\boldsymbol{\theta _1} - \boldsymbol{\theta _2} \in \Z ^n$. Then \(\left[ \varphi \left(\boldsymbol{\theta _1}\right)\right] = \left[ \varphi \left(\boldsymbol{\theta _2}\right)\right]\) is equivalent to \(A\boldsymbol{\theta _1} - A\boldsymbol{\theta _2} = A(\boldsymbol{\theta _1} - \boldsymbol{\theta _2})\). Hence, we have \(\boldsymbol{\theta _1} - \boldsymbol{\theta _2} \in \Z ^n\) by definition and multiplying by $A$ will result with a vector with integer entries.
\end{proof}

For the proof of the following lemma, we make use of the Smith normal form. The Smith normal form takes an arbitrary matrix $A \in \textrm{Mat}_{n \times m}(\Z)$ and factorises it as $A = NDM$ where $N \in \mathrm{GL}_n(\Z)$, $M \in \mathrm{GL}_m(\Z)$, and $D$ is a diagonal rectangular matrix. We state without a proof that any square matrix with entries in a principal integral domain admits a Smith normal form. \cref{lem:changeofbasisontorus} says, in essence, that it is legal to apply Smith normal form.

\begin{lem}
\label{lem:numberofkflatsinnicearragements}
    Let $(\A,\T^n)$ be an arrangement such that $\A=\{H_1,H_2,\ldots,H_n\}$ where each hyperplane is defined by its normal vector. Then we have \[f_k(\A)= |\det(\Theta)|\binom{n}{k},\] where $\Theta$ is the system of hyperplanes of $\A$.
\end{lem}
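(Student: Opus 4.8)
The plan is to realise $\A$ as the pullback of a coordinate arrangement on the target torus along a covering map of degree $|\det(\Theta)|$, and to count flats downstairs where the combinatorics is transparent. Writing the normal vectors as the rows of $\Theta$, the group homomorphism $\Theta\colon \T^n \to (S^1)^n$, $\Vec{x}\mapsto \Theta\Vec{x}$, satisfies $H_i = \Theta^{-1}(K_i)$, where $K_i = \{\,\Vec{y}\colon y_i = b_i\,\}$ is the $i$-th coordinate hyperplane of the target. Thus $\A = \Theta^{-1}(\mathcal{K})$ for the coordinate arrangement $\mathcal{K} = \{K_1,\dots,K_n\}$. I assume throughout that $\det(\Theta)\neq 0$ (equivalently, that $\A$ is spanning); otherwise no $0$-flats exist and the formula reads $f_k=0$ degenerately.

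First I would verify that $\Theta$ is a covering map of degree $|\det(\Theta)|$. Since $\det(\Theta)\neq 0$, the homomorphism $\Theta$ is a surjection of $n$-tori with finite kernel, hence a covering map whose number of sheets equals $|\ker(\Theta)|$. To compute this cleanly I would invoke the Smith normal form $\Theta = NDM$ with $N,M\in \mathrm{GL}_n(\Z)$ and $D = \mathrm{diag}(d_1,\dots,d_n)$; by \cref{lem:changeofbasisontorus} both $N$ and $M$ descend to automorphisms of $\T^n$, so $\deg(\Theta) = \deg(D) = \prod_i |d_i| = |\det(D)| = |\det(\Theta)|$, the circle-wise cover $z\mapsto d_i z$ having degree $|d_i|$.

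Next I would count the flats of $\mathcal{K}$ directly from \cref{defn:kflats}. For a subset $S$ with $|S| = n-k$, the intersection $\bigcap_{i\in S}K_i$ is a single $k$-dimensional coordinate subtorus $\cong \prod_{i\notin S}S^1$; deleting the lower flats amounts to removing the marked point $b_i$ from each remaining circle, and since $S^1\setminus\{b_i\}$ is a single arc, the product $\prod_{i\notin S}(S^1\setminus\{b_i\})$ is connected. Hence each $S$ contributes exactly one $k$-flat, giving $f_k(\mathcal{K}) = \binom{n}{n-k} = \binom{n}{k}$. Finally I would transfer this upstairs: the union of all $k$-flats of $\A$ is the $\Theta$-preimage of the union of all $k$-flats of $\mathcal{K}$, each $k$-flat of $\mathcal{K}$ is an open box $\cong \R^k$ and hence simply connected, so the covering trivialises over it and its preimage splits into exactly $|\det(\Theta)|$ homeomorphic copies, each a $k$-flat of $\A$. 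Summing over the $\binom{n}{k}$ flats of $\mathcal{K}$ yields $f_k(\A) = |\det(\Theta)|\binom{n}{k}$.

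The main obstacle I anticipate is the last transfer step: making rigorous that the inductive, $\pi_0$-based definition of $k$-flats is compatible with pulling back along the covering, i.e.\ that deletion of $\bigcup_{j<k}\Sigma_j$ in \cref{defn:kflats} commutes with $\Theta^{-1}$. This should follow from $\Theta$ being a local homeomorphism together with the identity $H_i = \Theta^{-1}(K_i)$, which makes preimage commute with both intersection and set difference; the crux is simply that each flat of $\mathcal{K}$ is connected and simply connected, so the $|\det(\Theta)|$ sheets separate its preimage into that many flats of equal dimension.
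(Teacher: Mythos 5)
Your proposal is correct and follows essentially the same route as the paper: both reduce to the coordinate arrangement (where each $(n-k)$-element subset of hyperplanes contributes exactly one $k$-flat, giving $\binom{n}{k}$) and then multiply by the degree $|\det(\Theta)|$ of a covering map extracted from the Smith normal form. If anything, you are more explicit than the paper about why the flat count multiplies by the degree --- trivialising the cover over each simply connected flat downstairs --- which fills in a step the paper leaves implicit rather than diverging from it.
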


\begin{proof}
    First, we check the lemma for the standard arrangement $(\mathcal{I},\T^n)$, so we define each hyperplane $H_i$ by the kernel of the map $$\theta_i:\T^n \to S^1; \; \Vec{x} \mapsto \Vec{e}_i\cdot\Vec{x}.$$ Every $k$-flat in $\mathcal{I}$ is uniquely determined by choosing $n - k$ hyperplanes because any subset of hyperplanes in $\A$ intersect uniquely. Hence, $$f_k(\A)= \binom{n}{n-k} = \binom{n}{k}.$$ 
    In case the hyperplanes are not given by coordinate vectors, fix a basis and consider the system of hyperplanes $\Theta$ of $\A$. Using Smith normal form, we can write $\Theta = N\Theta'M$. Then the rows of $\Theta'$ are the normal vectors of multiples of coordinate hyperplanes. This also describes a covering map $q: \T^n \to \T^n$ given by $(\Vec{e_1}, \Vec{e_2}, \ldots, \Vec{e_n}) \mapsto (m_1\Vec{e_1},m_2\Vec{e_2},\ldots,m_n\Vec{e_n})$, where $m_i$ is the $i$-th diagonal entry of $\Theta'$. The fibre $q^{-1}(0)$ has cardinality $|\det(\Theta')|$, therefore $q$ is a covering of degree $|\det(\Theta')|$, so $$f_k(\A)= |\det(\Theta')|\binom{n}{k} = |\det(\Theta)|\binom{n}{k}.$$
\end{proof}

\begin{cor}
\label{lem:determinatgivesregions}
Let $(\A,\T^n)$ be an arrangement of $n$ hyperplanes $H_1,H_2,\ldots , H_n$ defined by their normal vector. Then the number of regions is given by the determinant of the system of hyperplanes $\det(\Theta)$.
\end{cor}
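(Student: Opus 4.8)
The plan is to read this off directly from \cref{lem:numberofkflatsinnicearragements} by specialising that formula to top-dimensional flats. The number of regions of $\A$ is, by definition, $r(\A) = f_n(\A)$: since the arrangement consists of exactly $n$ hyperplanes defined by their normal vectors and $\det(\Theta) \neq 0$ (otherwise the claimed count would be $0$), those normal vectors are linearly independent over $\R$ and hence span $\R^n$, so $\A$ is spanning. By \cref{lem:generalpositionmeansflatsareconvex} the connected components of the complement $\T^n \setminus \bigcup_i H_i$ are simply connected polytopes coinciding with the $n$-flats $\Sigma_n(\A)$, so counting regions is the same as counting $n$-flats.

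Next I would apply \cref{lem:numberofkflatsinnicearragements} with $k = n$. The lemma gives $f_k(\A) = |\det(\Theta)|\binom{n}{k}$ for every $k$, and substituting $k = n$ yields
\[ f_n(\A) = |\det(\Theta)|\binom{n}{n} = |\det(\Theta)|, \]
since $\binom{n}{n} = 1$. This is precisely the asserted count of regions, so the corollary follows immediately.

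The only subtlety worth flagging is the sign: a count of regions is a nonnegative integer, so the statement is to be read as $r(\A) = |\det(\Theta)|$ rather than the signed determinant. There is no genuine obstacle here, as the entire content is already contained in the preceding lemma; the corollary is simply its $k = n$ instance combined with the identification of regions with top-dimensional flats granted by \cref{lem:generalpositionmeansflatsareconvex}.
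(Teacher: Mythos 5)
Your proposal is correct and matches the paper's (implicit) argument exactly: the paper states this as an immediate corollary of \cref{lem:numberofkflatsinnicearragements}, obtained by setting $k=n$ so that $f_n(\A)=|\det(\Theta)|\binom{n}{n}=|\det(\Theta)|$, with regions identified with $n$-flats as in the discussion following \cref{defn:kflats}. Your remark about the absolute value is a fair and accurate reading of the statement.
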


    

\subsection{Generalised deletion-restriction and the symmetry of \texorpdfstring{$f$}{Generalised deletion-restriction and symmetry of f-vectors}-vectors}
Recall the following relation in case of real hyperplane arrangements between the regions of $\A, \A',$ and $\A''$.

\begin{thm}[\cite{stanley_2015}]
\label{thm:stanleydeletionrestriction}
    Let $(\A,\A^\prime, \A^{\prime\prime})$ be a triple of arrangements with distinguished hyperplane $H_0$. Then 
    \[f_n (\A) = f_n(\A') + f_{n-1} (\A^{\prime\prime}).\]
\end{thm}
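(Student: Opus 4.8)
The plan is to prove the deletion-restriction relation $f_n(\A) = f_n(\A') + f_{n-1}(\A'')$ by tracking what happens to the regions of the arrangement when we add back the distinguished hyperplane $H_0$ to the deletion $\A'$. The fundamental observation is that $\A = \A' \cup \{H_0\}$, so the regions of $\A$ are obtained from the regions of $\A'$ by slicing along $H_0$. Concretely, I would argue that every region $R$ of $\A'$ either is disjoint from $H_0$ (in which case it survives unchanged as a single region of $\A$) or is cut by $H_0$ into exactly two pieces. Thus the number of new regions created equals the number of regions of $\A'$ that $H_0$ actually passes through.

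The key step is to identify the number of regions of $\A'$ that $H_0$ meets with the quantity $f_{n-1}(\A'')$. Here $\A'' = \A^{H_0}$ is the restriction of $\A$ to $H_0$, an arrangement living inside the $(n-1)$-dimensional space $H_0$ whose hyperplanes are the intersections $H_0 \cap H$ for $H \in \A'$. The top-dimensional flats of $\A''$ are exactly the $(n-1)$-flats, i.e.\ the connected components of $H_0 \setminus \bigcup_{H \in \A'} (H_0 \cap H)$. I would show that each such component of $H_0$ lies in the interior of precisely one region of $\A'$, and that distinct components correspond to distinct regions being split; conversely each region of $\A'$ that $H_0$ passes through contributes at least one such component. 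Matching these two counts gives that the number of regions split equals $f_{n-1}(\A'')$, which is precisely the number of top-dimensional flats of the restriction. Since each split increases the region count by one, we obtain $f_n(\A) = f_n(\A') + f_{n-1}(\A'')$.

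For the toric setting I would need to take care that $H_0$ is connected (guaranteed by \cref{lem:simplehyperplanesareconnected}, as $H_0$ is simple) and that, by \cref{lem:generalpositionmeansflatsareconvex}, both $\A$ and $\A'$ have simply connected regions when spanning and in general position, so the "slicing into two pieces" picture is valid rather than being complicated by wrap-around on the torus. The general position hypothesis ensures that $H_0$ crosses the lower-dimensional skeleton transversally, so that the trace of $H_0$ on each region it meets is a single connected $(n-1)$-dimensional cell, justifying the exact two-to-one splitting.

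The main obstacle I anticipate is the torus-specific verification that $H_0$ splits each region it meets into exactly two pieces, rather than returning to the same region after winding around $\T^n$; on $\R^n$ a hyperplane trivially separates any convex region into two, but on $\T^n$ one must rule out the possibility that $H_0$ re-enters a region from the other side. This is where the spanning and general position hypotheses are essential: together with \cref{lem:generalpositionmeansflatsareconvex} they force the relevant regions to be convex polytopes (lifts of $I^n$ under the covering map), so the restriction of $H_0$ behaves as it does in the Euclidean picture and the bijection between new regions and components of $H_0 \setminus \bigcup (H_0 \cap H)$ goes through cleanly.
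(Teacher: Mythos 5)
Your final formula is the right one, but the central geometric claim in your argument --- that $H_0$ cuts each region of $\A'$ it meets into \emph{exactly two} pieces, and that distinct components of $H_0\setminus\bigcup_{H\in\A'}(H_0\cap H)$ lie in \emph{distinct} regions of $\A'$ --- is false on the torus, and the convexity supplied by \cref{lem:generalpositionmeansflatsareconvex} does not rescue it. The obstacle you flag at the end is a genuine failure, not a technicality: under $\pi:\R^n\to\T^n$ the hyperplane $H_0$ lifts not to a single affine hyperplane but to a $\Z$-indexed family of parallel hyperplanes, so even when a region $R$ of $\A'$ lifts to a convex box, several of these parallel translates can pass through that one box. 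Concretely, on $\T^2$ take $\A'=\{\langle 1,0\rangle,\langle 0,1\rangle\}$, which has a single region (the open unit square), and let $H_0=\langle 1,2\rangle$ with generic intercept. Then $f_2(\A')=1$, the restriction $\A''$ consists of $2+1=3$ points on $H_0\cong S^1$ so $f_{1}(\A'')=3$, and $H_0$ cuts the single square into four regions, not two. Your two intermediate claims err in opposite directions and happen to cancel in the formula, but as literally written your proof concludes that the number of new regions equals the number of regions of $\A'$ met by $H_0$, which here is $1\neq 3$.

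The paper does not reprove this statement (it is quoted from Stanley) but proves the generalisation \cref{lem:deletionrestriction}, and the step your proposal is missing is exactly the content of its ``positive boundary'' argument there: a region $\sigma$ of $\A'$ meeting $H_0$ decomposes into finitely many regions $\sigma^1,\dots,\sigma^m$ of $\A$, of which exactly one has no positive boundary on $H_0$, and the remaining $m-1$ are placed in bijection with the $(n-1)$-flats of $\A''$ contained in $\sigma$. To repair your argument, replace ``cut into exactly two pieces'' by that bijection: each connected component of $H_0\setminus\bigcup_{H\in\A'}(H_0\cap H)$ is adjacent to exactly one new region (the one on its positive side), every new region arises this way, and summing over all regions of $\A'$ gives $f_n(\A)-f_n(\A')=f_{n-1}(\A'')$ without ever needing to control how many times $H_0$ revisits a given region.
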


The proof of \cref{thm:stanleydeletionrestriction} provided in \cite{stanley_2015} is adaptable to arrangements in general position on $\T^n$ as well. However, instead of only studying the deletion-restriction relation for regions of $\A$, we would like to generalise the theorem for other dimensional flats as well. 

\begin{exmp}
    Suppose that we have an arrangement $(\A,\T^2)$ with hyperplanes $H_0 = \langle 1,0 \rangle, H_1 = \langle -1,2 \rangle, H_2 = \langle 1,2 \rangle$. We let $H_0$ be a distinguished hyperplane and consider $\A', \A''$, see \cref{fig:deletionrerstrictionexample}. The objective is to count the number of $1$-dimensional flats recursively using $\A'$ and $\A''$. We can rewrite the set of $1$-flats $\Sigma_1$ of $\A$ as
    \[ \Sigma_1(\A) = \{ \sigma \in \Sigma_1(\A) \colon \sigma \cap H_0 = \emptyset \} \sqcup \{\sigma \in \Sigma_1(\A) \colon \sigma \cap H_0 \neq \emptyset, \sigma \nsubseteq H_0\} \sqcup \{ \sigma \in \Sigma_1(\A) \colon \sigma \subseteq H_0 \}, \]
    that is, we distinguish the $1$-flats of $\A$ which do not intersect $H_0$, the 1-flats which intersects $H_0$ in codimension 1 and the 1-flats which are properly contained in $H_0$.

    \begin{figure}[htt]
        \centering
                \begin{tikzpicture}
    	\definecolor{cadmiumgreen}{rgb}{0.0, 0.42, 0.24}
      \tikzstyle{invi} = [outer sep=0,inner sep=0,minimum size=0]
      \tikzstyle{pt} = [fill,circle,outer sep=0,inner sep=1.5,minimum size=1.5]
        \draw  (-4.5,3.5) node[invi] (v3) {} rectangle (-0.5,-0.5) node[invi] (v1) {};
        \draw[blue] (-4.5,-0.5) -- (-0.5,1.5);
        \draw[blue] (-4.5,1.5) node[invi] (v2) {} -- (-0.5,3.5);
        \draw[orange] (v1) -- (v2);
        \draw[orange] (-0.5,1.5) -- (v3);
        \draw[cadmiumgreen] (-3.5,3.5) -- (-3.5,-0.5);
        \draw  (1,3.5) node[invi] (v5) {} rectangle (5,-0.5) node[invi] (v4) {};
        \draw[orange] (v4) -- (1,1.5) node[invi] (v7) {};
        \draw[orange] (5,1.5) node[invi] (v6) {} -- (v5);
        \draw[blue] (1,-0.5) -- (v6);
        \draw[blue] (v7) -- (5,3.5);
        
        \node[pt] at (-3.5,0) {};
        \node[pt] at (-3.5,1) {};
        \node[pt] at (-3.5,2) {};
        \node[pt] at (-3.5,3) {};
        \node[pt] at (7.1,2.1) {};
        \node[pt] at (9,2.2) {};
        \node[pt] at (7.2,1.1) {};
        \node[pt] at (10.2,1.2) {};
        \node at (-2.6,-1) {$\mathcal{A}$};
        \node at (3.2,-1) {$\mathcal{A}'$};
        \node at (8.6,0.5) {$\mathcal{A}''$};
        \draw[cadmiumgreen] (8.5,1.6) ellipse (2.4 and 0.6);
        \node at (-3.3,1.2) {$b$};
	\node[pt] at (-2.5,0.5) {};
	\node at (-2.4,0.8) {$a$};
	\node[pt] at (3,0.5) {};
	\node at (3.1,0.8) {$a$};
	\node at (6.9,0.9) {$b$};
    \end{tikzpicture}
        \caption{The triple of arrangements $(\A,\A',\A'')$ with $H_0 = \langle 1,0 \rangle$ (green), $H_1 = \langle -1,2 \rangle$ (blue), $H_2 = \langle 1,2 \rangle$ (orange).}
        \label{fig:deletionrerstrictionexample}
    \end{figure}
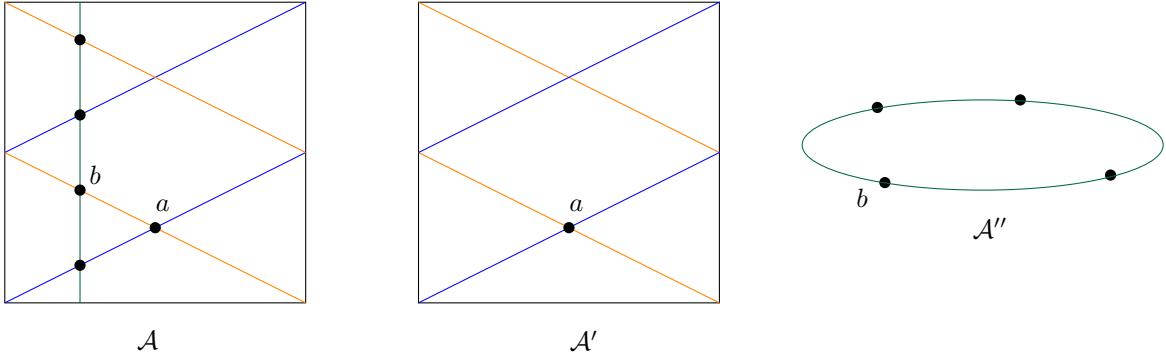

    Clearly, $1$-flats of $\A''$ is a subset of $1$-flats of $\A$ as these flats are not contained in $\A'$. We can see that the other $1$-flats of $\A$ arise as $1$-flats of $\A'$ which are either preserved or cut up by $H_0$. We can thus write the set of $1$-flats of $\A'$ as
    \[\Sigma_1 (\A') = \{\sigma \in \Sigma_1 (\A') \colon \sigma \cap H_0 = \emptyset\} \sqcup \{\sigma \in \Sigma_1(\A') \colon \sigma \cap H_0 \neq 0 \}.\]
    The set of $1$-flats of $\A'$ which have an empty intersection with $H_0$ are also $1$-flats of $\A$. On the other hand, the $1$-flats with a non-empty intersection with $H_0$ are the flats cut up by $H_0$. Since a $1$-flat is homeomorphic to an (open) interval $I$ in $\R$, we know that $m$ points, corresponding to $m$ intersections with $H_0$, divide $I$ into $m+1$ intervals. The intersection information is encoded as $0$-flats of $\A''$ by definition. Also, because we now have $m+1$ intervals, we can identify $m$ intervals with a $m$ subset of $0$-flats of $\A''$ and the remaining interval is identified with the original $1$-flat of $\A'$. This holds for every flat of $\A'$ which has a non-empty intersection, so  finally we identify
    \[ \Sigma_1(\A) \cong \Sigma_1(\A') \sqcup \Sigma_1(\A'') \sqcup \Sigma_0(\A''), \] which implies \[f_1(\A) = f_1(\A') + f_1(\A'') + f_0(\A'').\]
\end{exmp}

\begin{lem}[Generalised deletion-restriction]
\label{lem:deletionrestriction}
Let $(\mathcal{A}, \mathcal{A}^\prime, \mathcal{A}^{\prime\prime})$ a triple of spanning arrangements in general position, such that every flat is simply connected, with a distinguished hyperplane $H_0$ and assume that $|\A| \geq n + 1$. Then
\[f_k (\A) = f_k (\A') + f_k (\A'') + f_{k-1}(\A'').\]
\end{lem}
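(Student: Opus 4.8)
The plan is to mimic the $k=1$ computation carried out in the preceding example, but in arbitrary dimension. I would partition the set $\Sigma_k(\A)$ of $k$-flats of $\A$ into three classes according to their position relative to the distinguished hyperplane $H_0$: the flats $\sigma$ with $\sigma \cap H_0 = \emptyset$ (type A), the flats with $\sigma \cap H_0 \neq \emptyset$ but $\sigma \nsubseteq H_0$ (type B), and the flats with $\sigma \subseteq H_0$ (type C). The three summands on the right-hand side arise from these classes: type C will account for $f_k(\A'')$, type A for the part of $f_k(\A')$ whose flats miss $H_0$, and type B simultaneously for the remaining part of $f_k(\A')$ and for $f_{k-1}(\A'')$.

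First I would dispose of type C. A $k$-flat $\sigma \subseteq H_0$ is a connected component of $\bigcap_{i \in S} H_i$ minus lower flats with $|S| = n-k$, and general position forces $H_0 \in S$, since otherwise $\dim(H_0 \cap \bigcap_{i \in S} H_i) = k-1 < k$ and $\bigcap_{i \in S} H_i$ could not lie in $H_0$. Writing $S = \{H_0\} \cup S'$ with $|S'| = n-k = (n-1)-(k-1)$, and recalling that the hyperplanes of $\A'' = \A^{H_0}$ are exactly the $H_0 \cap H_i$, this identifies the type-C flats with the $k$-flats of the $(n-1)$-dimensional arrangement $\A''$ inside $H_0 \cong \T^{n-1}$. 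Hence type C contributes $f_k(\A'')$.

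For types A and B I would compare with $\Sigma_k(\A')$, splitting the $k$-flats of the deletion into those disjoint from $H_0$ and those meeting $H_0$. The former are unaffected by re-inserting $H_0$ and biject with type A. For a $k$-flat $\tau$ of $\A'$ meeting $H_0$, the crucial geometric step is to count the pieces into which $H_0$ cuts $\tau$. Here I would pass to the cover $\pi\colon \R^n \to \T^n$: by the simply-connectedness hypothesis (and \cref{lem:generalpositionmeansflatsareconvex}) a lift $\tilde\tau$ is an open convex region of dimension $k$, while $\pi^{-1}(H_0)$ is a family of mutually parallel affine hyperplanes. A family of parallel hyperplanes meeting a convex set in $m_\tau$ slices partitions it into exactly $m_\tau + 1$ convex pieces; projecting back, $\tau$ is cut into $m_\tau + 1$ $k$-flats of $\A$ (the type-B flats inside $\tau$), while $\tau \cap H_0$ has $m_\tau$ components, each of which lies in the $n-k+1 = n-(k-1)$ hyperplanes obtained by adjoining $H_0$, hence is a $(k-1)$-flat of $\A''$.

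It remains to assemble the count. Summing over the $k$-flats $\tau$ of $\A'$ that meet $H_0$, the type-B flats number $\sum_\tau (m_\tau + 1) = \bigl(\text{number of such } \tau\bigr) + \sum_\tau m_\tau$. Adding the type-A count recovers all of $f_k(\A')$ — the flats disjoint from $H_0$ together with the one ``leftover'' piece per meeting flat — while $\sum_\tau m_\tau$ equals $f_{k-1}(\A'')$ once I verify that every $(k-1)$-flat of $\A''$ occurs as a component of $\tau \cap H_0$ for a unique $\tau$; this is where general position enters, guaranteeing that each such slice lies in the interior of a single $k$-flat of $\A'$. Combining the three classes yields $f_k(\A) = f_k(\A') + f_k(\A'') + f_{k-1}(\A'')$, and the boundary cases agree with \cref{thm:stanleydeletionrestriction}, since $f_n(\A'') = 0$ (as $\A''$ lives in dimension $n-1$) and $f_{-1}(\A'') = 0$ by our convention. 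I expect the main obstacle to be precisely this cutting step and its bookkeeping: making rigorous, via the parallel-hyperplane lift, that a type-B flat of $\A'$ splits into exactly $m_\tau + 1$ pieces, and that the resulting slices are in bijection with the $(k-1)$-flats of $\A''$, each contained in a unique deletion flat.
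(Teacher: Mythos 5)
Your proposal is correct and follows essentially the same route as the paper: the identical three-way decomposition of $\Sigma_k(\A)$ relative to $H_0$, the identification of the contained flats with $\Sigma_k(\A'')$ and the disjoint flats with part of $\Sigma_k(\A')$, and the lift to $\R^n$ plus convexity to handle the flats that $H_0$ cuts. The only difference is cosmetic: where you count slices directly via ``$m_\tau$ parallel hyperplanes cut a bounded convex set into $m_\tau+1$ pieces,'' the paper singles out the one ``leftover'' piece per $\tau$ by a positive-boundary argument, but the resulting bookkeeping is the same.
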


\begin{proof}
    Let the distinguished hyperplane $H_0$ be defined by the group homomorphism $\theta: \T^n \to S^1$. Then we can decompose the $k$-flats of $\A$, $\Sigma_k(\A)$, as a disjoint union,
    \begin{equation*}
         \Sigma_k(\A) = \overbrace{\{\sigma \in \Sigma_k(\A) \colon \sigma \cap H_0 = \emptyset \}}^{(1)} \sqcup \overbrace{\{\sigma \in \Sigma_k(\A) \colon \sigma \cap H_0 \neq \emptyset, \sigma \nsubseteq H_0 \}}^{(2)} \sqcup \overbrace{\{\sigma \in \Sigma_k(\A) \colon \sigma \subseteq H_0\}}^{(3)},
    \end{equation*}
    where we keep track of the sets by numbering throughout the proof.
    
    Since $\A$ is in general position, it follows that any $k$-flat $\sigma \in \Sigma_k(\A)$ belongs to exactly one of the sets listed above.

    We immediately have that a $k$-flat properly contained in $H_0$ must be a $k$-flat of $\A''$ by definition, so \[\overbrace{\{\sigma \in \Sigma_k(\A) \colon \sigma \subseteq H_0\}}^{(3)} = \Sigma_k(\A'').\]

    The $k$-flats of $\A$ which are not contained in $\A''$ must be contained in $\A'$. So we examine $\Sigma_k(\A')$. By adding $H_0$ to $\A'$, a $k$-flat of $\A'$ is either preserved or gets cut up by $H_0$ into more $k$-flats. Thus, the set $\Sigma_k(\A')$ can be written as a disjoint union \[ \Sigma_k(\A') = \overbrace{\{\sigma \in \Sigma_k(\A') \colon \sigma \cap H_0 = \emptyset \}}^{(4)} \sqcup \overbrace{\{\sigma \in \Sigma_k(\A') \colon \sigma \cap H_0 \neq \emptyset\}}^{(5)}. \]
    The set of $k$-flats of $\A'$ with an empty intersection with $H_0$ is in bijection with the set of $k$-flats of $\A$ with an empty intersection with $H_0$, therefore \[ \overbrace{\{ \sigma \in \Sigma_k(\A') \colon \sigma \cap H_0 = \emptyset\}}^{(4)} = \overbrace{\{\sigma \in \Sigma_k(\A) \colon \sigma \cap H_0 = \emptyset \}}^{(1)}.\]

    Now it is left to investigate the set $(2)$, the $k$-flats of $\A$ which have a non-empty intersection with $H_0$ but are not properly contained in $H_0$. We claim that such flats can be labelled by either a $k$-flat of $\A'$ or a $(k-1)$-flat of $\A''$. To see this, let $\pi:\T^n \to \R^n$ be a covering space and $\tilde{\theta}:\R^n \to \R$ be the lift of $\theta$. For a $k$-flat $\sigma$, we will let $\tau$ be its \emph{boundary} if $\sigma \ledot \tau$ in $\mathcal{L}(\A)$ and $\dim(\tau) = \dim(\sigma) - 1$. Define a boundary $\tau$ of a lift of a flat $\pi^{-1}(\sigma)$ in $H_0$ to be positive if for all paths $\gamma:[0,1] \to \R^n$ contained in $\pi^{-1}(\sigma)$ ending at $\tau$ with $\gamma' = 0$ we have $(\tilde{\theta}(\gamma))' > 0$.

    Fix a flat $\sigma \in \Sigma_k(\A)$ and suppose, for sake of contradiction, that its lift $\pi^{-1}(\sigma)$ has two distinct positive boundaries, say $\tau_1$ and $\tau_2$. Since $\sigma$ is convex by \cref{lem:generalpositionmeansflatsareconvex}, $\pi^{-1}(\sigma)$ is convex and there exists a positive linear path $\gamma$ such that $\gamma(0) \in \tau_1$ and $\gamma(1) \in \tau_2$ which is contained in $\pi^{-1}(\sigma)$. But then the opposite path $\gamma'$ such that $\gamma'(0) \in \tau_2$ and $\gamma'(1) \in \tau_1$ cannot be positive, so $\tau_1$ is not positive which is a contradiction. Thus, there is at most one positive boundary of $\sigma$ in $H_0$.

    Each $k$-flat $\sigma$ in $\{\sigma \in \Sigma_k(\A') \colon \sigma \cap H_0 \neq \emptyset\}$ can be written as a disjoint union of a finite chain of $k$-flats $\sigma^1, \sigma^2, \ldots, \sigma^m \in \Sigma_k(\A)$ so that $\sigma^i \subseteq \sigma$ for every $i \in \{1,2\ldots,m\}$. There exists a $\sigma^0$ such that $$\tilde{\theta}(\pi^{-1}(\sigma^i)) \leq \tilde{\theta}(\pi^{-1}(\sigma^0)),$$ for all $i \in I$. We claim that $\sigma^0$ is the unique flat contained in $\sigma$ which is not positive. As done previously, suppose there are two distinct $k$-flats $\sigma^0$ and $\sigma^1$ with non-positive boundaries in $H_0$ for contradiction. Then $\sigma$ must have two distinct positive boundaries in $H_0$ which is a contradiction by the above.

    Thus, for any $k$-flat $\sigma$ in the set $\{\sigma \in \Sigma_k(\A') \colon \sigma \cap H_0 \neq \emptyset\}$ can be written as \[\sigma = \bigsqcup_{i \in I} \sigma^i,\] where $\sigma^i \in \Sigma_k(\A)$ and $\sigma^i$ is not positive for exactly one index $i \in I$. The flats $\sigma^i$ with a positive boundary are in correspondence with the $(k-1)$-flats of $\A''$ which are contained in $\sigma$, while the $\sigma^i$ without a positive boundary are in correspondence with $\sigma$. We conclude that 
    \[(2) = (4) \sqcup \Sigma_{k-1}(\A'').\] 
    Therefore the $k$-flats of $\A$ can be written as
    \[\Sigma_k(\A) \cong \Sigma_k(\A') \sqcup \Sigma_k(\A'') \sqcup \Sigma_{k-1}(\A''), \] and enumerating the sets yields \[ f_k(\A) = f_k(\A') + f_k(\A'') + f_{k-1}(\A''). \]
\end{proof}

We conjecture the generalised deletion-restriction applies to arrangements containing non-simply connected regions as well, which we demonstrate in the following simple example.

\begin{exmp}
     Suppose we have the standard arrangement $\I$ on $\T^2$ and we want to count the number of edges using the generalised deletion-restriction. Then $f_1(\I') = 0$ because there are no simply-connected flats in $\I'$. However, $\I''$ is essentially an arrangement on $\T^1$ of one hyperplane hence $f_1(\I'') = f_{0}(\I'') = 1$, so the relation rightly yields $f_1(\I) = 2$.
\end{exmp}

We make use of \cref{lem:deletionrestriction} to prove the main result of the paper.

\begin{thm}[Symmetry of $f$-vectors]
\label{prop:facetsduality}
Suppose we have a hyperplane arrangement $(\A,\T^n)$ such that $|\A|\geq n$. There is a bijection between the $k$-flats and $(n-k)$-flats of $\A$, $$\Sigma_k(\A) \cong \Sigma_{n-k}(\A).$$ In particular, \(f_k (\A) = f_{n-k} (\A)\).
\end{thm}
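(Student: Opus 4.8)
The plan is to prove the symmetry by strong induction on the number of hyperplanes $m = |\A|$, carried out simultaneously over all ambient dimensions $n$. The two ingredients are \cref{lem:numberofkflatsinnicearragements}, which computes the $f$-vector of an arrangement of exactly $n$ hyperplanes, and the generalised deletion--restriction relation \cref{lem:deletionrestriction}; the hypothesis that every flat be simply connected, needed for the latter, is supplied for free by \cref{lem:generalpositionmeansflatsareconvex}. For the base case $m = n$ the arrangement consists of exactly $n$ hyperplanes, and since $\A$ is spanning their normal vectors are linearly independent, so $\det(\Theta) \neq 0$. \cref{lem:numberofkflatsinnicearragements} gives $f_k(\A) = |\det(\Theta)|\binom{n}{k}$, and the symmetry of binomial coefficients $\binom{n}{k} = \binom{n}{n-k}$ yields $f_k(\A) = f_{n-k}(\A)$ at once.

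For the inductive step, suppose $m \geq n+1$ and that the symmetry holds for every spanning arrangement in general position (in any dimension) with fewer than $m$ hyperplanes. Fix a distinguished hyperplane $H_0$ and form the triple $(\A, \A', \A'')$, where $\A'$ is an arrangement of $m-1 \geq n$ hyperplanes in $\T^n$ and $\A''$ is an arrangement in $H_0 \cong \T^{n-1}$. Before applying the hypothesis I would verify that both $\A'$ and $\A''$ are again spanning and in general position: deletion plainly preserves general position and, since $m-1 \geq n$, spanning; for $\A''$, applying the general position inequality to a set of $n-1$ hyperplanes whose normals complete $\Vec{a_0}$ to a basis of $\R^n$ forces the restricted normals to span $H_0$, so $\A''$ is spanning in $\T^{n-1}$ with $|\A''| \geq n-1$. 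Hence the induction hypothesis applies to give $f_k(\A') = f_{n-k}(\A')$ and the $(n-1)$-dimensional symmetry $f_j(\A'') = f_{(n-1)-j}(\A'')$.

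Now apply \cref{lem:deletionrestriction} to both $f_k(\A)$ and $f_{n-k}(\A)$:
\[
f_k(\A) = f_k(\A') + f_k(\A'') + f_{k-1}(\A''), \qquad f_{n-k}(\A) = f_{n-k}(\A') + f_{n-k}(\A'') + f_{n-k-1}(\A'').
\]
The leading terms agree by the inductive symmetry for $\A'$, so it suffices to check $f_k(\A'') + f_{k-1}(\A'') = f_{n-k}(\A'') + f_{n-k-1}(\A'')$. Substituting the $(n-1)$-dimensional symmetry gives $f_k(\A'') = f_{n-1-k}(\A'')$ and $f_{k-1}(\A'') = f_{n-k}(\A'')$, so the left side equals $f_{n-k}(\A'') + f_{n-1-k}(\A'')$, which is exactly the right side. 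Therefore $f_k(\A) = f_{n-k}(\A)$, completing the induction. Throughout I would use the conventions $f_{-1} = 0$ and $f_n(\A'') = 0$ (no flats of dimension exceeding the ambient torus) to cover the extreme cases $k = 0$ and $k = n$.

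The main obstacle is not the algebra---the index arithmetic combining the two restriction terms is routine---but rather the bookkeeping that the hypotheses of \cref{lem:deletionrestriction} and of the induction hypothesis genuinely propagate to $\A'$ and $\A''$, and in particular that the dimension drop for $\A''$ is tracked correctly, since its symmetry is about $n-1$ and not $n$. If one wanted the stated bijection $\Sigma_k(\A) \cong \Sigma_{n-k}(\A)$ rather than only the numerical identity $f_k = f_{n-k}$, the inductive equalities would have to be upgraded to compatible bijections on the three pieces of the deletion--restriction decomposition, and reconciling these across the dimension drop is the genuinely delicate point.
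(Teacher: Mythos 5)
Your proposal is correct and follows essentially the same route as the paper: the base case $|\A|=n$ via \cref{lem:numberofkflatsinnicearragements} and the symmetry of binomial coefficients, then induction using \cref{lem:deletionrestriction} with the $(n-1)$-dimensional symmetry applied to $\A''$ to match the two restriction terms. Your explicit checks that $\A'$ and $\A''$ inherit the spanning and general-position hypotheses, and your remark that the stated bijection (as opposed to the numerical identity) would require compatible bijections across the deletion--restriction decomposition, are points the paper's own proof passes over silently.
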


\begin{proof}
We proceed by induction on the dimension of the ambient space first. For the base case, consider any arrangement on $\T^0$ for which the statement holds trivially.

We do a second induction on the number of hyperplanes in $\A$. Using the same notation as the proof of \cref{lem:numberofkflatsinnicearragements}, we have the case of $n$ hyperplanes on $\T^n$, $$f_{n-k} = |\det(\Theta')|\binom{n}{k} = |\det(\Theta)|\binom{n}{k} = f_k(\A),$$ because $\Theta':\T^n \to \T^n$ is a $|\det(\Theta')|$-fold cover. We can thus assume that the statement holds for arrangements $(\A,\T^n)$ with $|\A| = n$. 

Now suppose the induction hypothesis holds for all arrangements $|\A| < m$ on $\T^{l}$ with $l \leq n$. Let $(\A,\T^n)$ be an arrangement with $|\A| = m + 1$. We investigate what happens when we remove a hyperplane $H_{m+1}$ from $\A$, so that $|\A'|=m$. By \cref{lem:deletionrestriction}, we write
\begin{equation*}
    \begin{split}
        f_k (\A) &= f_k(\A') + f_k(\A'') + f_{k-1}(\A''),\\
        f_{n-k} (\A) &= f_{n-k}(\A') + f_{n-k}(\A'') + f_{n-k-1}(\A'').
    \end{split}
\end{equation*}
Clearly, $f_k(\A') = f_{n-k}(\A')$ because $|\A'|=m$. Moreover, $\A''$ is an arrangement of (not necessarily simple) hyperplanes on $\T^{m-1}$ with $|\A''| \leq m-1$. So by induction, we also have \[ f_{k}(\A'') = f_{n-k - 1}(\A'') \; \text{and} \; f_{k-1}(\A'') = f_{n-k}(\A''), \] and the relation holds.
\end{proof}

As a sanity check, we make sure that we have not broken any topological invariants of $\T^n$, namely the Euler characteristic. Recall the Euler characteristic of an arrangement $(\A, V)$, where $\dim(V) = n$, is defined as $$\sum_{i = 0}^{n} (-1)^i f_i(\A).$$ In particular, for $V = \T^n$, we have $\sum_{i = 0}^{n} (-1)^i f_i(\A) = 0$. For odd $n$, it is easy to see that having $f_k(\A) =f_{n-k}(\A)$ respects the alternating sum equalling 0.

\section{Regions of toric arrangements}
\label{sec:sectionfive}

\subsection{Counting the regions} Previously, we have defined regions to be the top dimensional objects of an arrangement $\A$. We have also seen that regions can only either be simply connected or have an infinite cyclic fundamental group. In the case regions of an arrangement $(\A,\T^n)$ of $n$ hyperplanes are simply connected, we have a definite answer in \cref{lem:numberofkflatsinnicearragements}. However, if we consider arrangement on $\T^n$ with strictly less than $n$ hyperplanes, we get a \emph{degenerate toric arrangement} due to the following lemma.

\begin{lem}
\label{lem:degeneratearrangement}
    Let $(\A,\T^n)$ be an arrangement with $|\A| = m$ such that $m < n$. Then $f_k(\A)= 0$ for all $k \in \{ 0, 1, \ldots, n \}$.
\end{lem}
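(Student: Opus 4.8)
The plan is to exploit the rank deficiency of the system of hyperplanes to exhibit a free circle direction along which the whole arrangement is constant, and then conclude that no flat can be simply connected. Since $|\A| = m < n$, the system of hyperplanes $\Theta \in \mathrm{Mat}_{m\times n}(\Z)$ has rank at most $m < n$, so its kernel is nonzero. First I would choose a primitive vector $\Vec{w} \in \Z^n$ with $\Theta\Vec{w} = \Vec{0}$, i.e.\ $\Vec{a_i}\cdot\Vec{w} = 0$ for every hyperplane $H_i$. Because each defining map then satisfies $\theta_i(\Vec{x} + t\Vec{w}) = \theta_i(\Vec{x}) + t(\Vec{a_i}\cdot\Vec{w}) = \theta_i(\Vec{x})$, translation along the circle subgroup generated by $\Vec{w}$ fixes every hyperplane setwise, and hence fixes both $\bigcup_i H_i$ and its complement.

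Next I would upgrade this invariance to an honest product decomposition using \cref{lem:changeofbasisontorus}. A primitive vector extends to a $\Z$-basis of $\Z^n$; the associated matrix lies in $\mathrm{GL}_n(\Z)$ and, by \cref{lem:changeofbasisontorus}, induces a homeomorphism of $\T^n$ carrying the $\Vec{w}$-direction to the first coordinate circle. In these coordinates each $\theta_i$ is independent of $y_1$, so every hyperplane splits as $H_i = S^1 \times H_i'$ for some $H_i' \subseteq \T^{n-1}$. Consequently every intersection, and the complement, splits off the same circle factor,
\[
\bigcap_{i\in S} H_i = S^1 \times \bigcap_{i\in S} H_i', \qquad \T^n \setminus \bigcup_i H_i = S^1 \times \Big(\T^{n-1} \setminus \bigcup_i H_i'\Big).
\]

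From here the conclusion is immediate: every connected component of every set appearing in \cref{defn:kflats} has the form $S^1 \times (\text{connected set})$, so its fundamental group contains the $\Z$ coming from the circle factor and it is therefore not simply connected. Since the flats counted by $f_k$ are exactly the simply connected cells (cf.\ \cref{lem:generalpositionmeansflatsareconvex}), none of these components qualifies, whence $\Sigma_k(\A) = \emptyset$ and $f_k(\A) = 0$ for every $k \in \{0,1,\ldots,n\}$. For the low range $k < n-m$ one even sees $f_k = 0$ directly from \cref{defn:kflats}, as there is no index set $S \subseteq I$ of size $n - k > m$.

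The step I expect to be the main obstacle is the reconciliation with the literal $\pi_0$-definition in \cref{defn:kflats}: read verbatim, $\Sigma_{n-m}$ and $\Sigma_n$ would record non-contractible components (for instance the single circle $H_1$ and the open cylinder $\T^2 \setminus H_1$ when $n=2,\,m=1$), so the argument must lean on the standing convention, justified through \cref{lem:generalpositionmeansflatsareconvex}, that a flat is a simply connected cell. Making this convention explicit, and checking that the primitive kernel vector extends to an integral basis so that \cref{lem:changeofbasisontorus} genuinely applies, are the only delicate points; the product decomposition and the non-simple-connectivity of each factor are then routine.
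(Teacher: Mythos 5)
Your proof is correct, but it takes a genuinely different route from the paper. The paper's own argument is purely combinatorial and very short: since $m < n$, no subset $S \subseteq I$ with $|S| = n$ exists, so $\Sigma_0(\A) = \emptyset$, and it then asserts that "inductively" there are no $k$-flats. That inductive step is really the same hidden appeal to the convention that a flat must be a simply connected cell (whose boundary carries lower-dimensional flats): for $k \geq n-m$ the sets $\bigcap_{i\in S} H_i \setminus \bigcup_{j<k}\Sigma_j$ with $|S| = n-k \leq m$ are nonempty and have connected components, exactly the issue you raise with the $n=2$, $m=1$ example. Your argument instead makes the topology explicit: the rank deficiency of $\Theta$ produces a primitive $\Vec{w}$ with $\Theta\Vec{w} = \Vec{0}$, which extends to a $\Z$-basis so that \cref{lem:changeofbasisontorus} yields a global splitting $S^1 \times (\cdot)$ of every hyperplane, every intersection, and the complement; hence no candidate component is simply connected and nothing qualifies as a flat. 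What the paper's approach buys is brevity and no reliance on any change of coordinates; what yours buys is an honest treatment of the range $k \geq n-m$, where the paper's "inductively implies" is doing unstated work, and it matches the mechanism the paper itself invokes later in the remark on parallel arrangements ("all the regions have a non-trivial fundamental group, so $f_i = 0$"). Both proofs ultimately rest on the same unwritten convention that flats are simply connected, which is not literally part of \cref{defn:kflats}; you are right to flag making that convention explicit as the one delicate point.
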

\begin{proof}
    Since $m < n$, we cannot obtain any complete $n$-intersections in $\A$, i.e. there cannot exist any $0$-dimensional flats. This inductively implies that there are no $k$-flats, hence $f_k(\A)= 0$ for all $k \in \{ 0, 1, \ldots, n \}$.
\end{proof}

As a consequence of \cref{lem:degeneratearrangement}, from this point we will only consider arrangements on $\T^n$ with at least $n$ hyperplanes.

We have already established a result concerning enumerating regions for hyperplane arrangements of $n$ hyperplanes in \cref{lem:numberofkflatsinnicearragements}. To give a geometric intuition of the idea, we use the symmetry of $f$-vectors, namely $f_0(\A) = f_n(\A)$. In the toy example of $\T^2$, we used the fact that a normal vector $\langle a,b\rangle$ of a hyperplane on $\T^2$ can be written as $a\langle 1,0\rangle + b\langle 0,1\rangle$, counted the number of line intercepts and then applied the symmetry. Continuing the example on $\T^2$, let $H_1 = \langle a, b \rangle$ and $H_2 = \langle c, d \rangle$ be hyperplanes given by their normal vectors, as usual. By the definition of a hyperplane, we have
\begin{equation*}
\begin{split}
    H_1 = \ker \left( \theta_1: \T^2 \to S^1; \; \begin{pmatrix} x_1 \\ x_2 \end{pmatrix} \mapsto ax_1 + bx_2 \right),\\
    H_2 = \ker \left( \theta_2: \T^2 \to S^1; \; \begin{pmatrix} x_1 \\ x_2 \end{pmatrix} \mapsto cx_1 + dx_2 \right),
\end{split}
\end{equation*}
Clearly the number of intersections, or 0-flats, is given by \(|H_1 \cap H_2|\) which is the same as the cardinality of the kernel of the direct product of group homomorphisms $\theta_1 \times \theta _2$, that is \(\ker (\T ^2 \xrightarrow[]{\theta _{1},\theta_2} S^1 \times S^1)\). Furthermore, $|\ker(\theta_1 \times \theta _2)|$ is given by the number of lattice point in $\Z^2$ inside\footnote{Since we are working on $\T^2$, we count lattice points on the normal vectors and inside the parallelogram.} the parallelogram determined by the normal vectors of the hyperplanes. 
Pick's theorem then says that the number of lattice points is equal to the area, which is precisely the determinant. Therefore, by applying \cref{prop:facetsduality}, we have that the number of regions of $\A$ is given by \[|\det\begin{pmatrix}
    a&b\\c&d
\end{pmatrix}|.\]
We want to employ a similar idea for a general case on $\T^n$, in particular, we will use Smith normal form to change bases of hyperplanes. For every $i \leq n$, consider the set of hyperplanes that are defined as kernels of group homomorphisms

\begin{equation*}
        H_i = \ker \left( \theta_i: \T^n \to S^1; \; \begin{pmatrix} x_1 \\ \vdots \\ x_n \end{pmatrix} \mapsto a_{i1}x_1 + \ldots + a_{in}x_n \right).
\end{equation*}
The direct product $\theta _N = \theta _1 \times \ldots \times \theta _n$ is a group homomorphism with \(\ker (\theta _N) = \bigcap _{i = 1} ^n \ker (\theta _i)\). In other words, we are looking for all vectors $\boldsymbol{x} \in \R ^n$ such that

\begin{equation*}
    \begin{pmatrix} 
    a_{11} & \ldots & a_{1n} \\
    \vdots & \ddots & \vdots \\
    a_{n1} & \ldots & a_{nn}
    \end{pmatrix} 
    \begin{pmatrix} x_1 \\ \vdots \\ x_n \end{pmatrix} = \begin{pmatrix} k_1 \\ \vdots \\ k_n \end{pmatrix},
\end{equation*}
where each $k_i \in \Z$ as such vectors $\Vec{x}$ will be elements of the kernel. Hence, finding the number of elements in $\ker (\theta _N)$ is equivalent to searching for the number of lattice points inside the parallelepiped given rows of $ \Theta = \left( H_1 | \ldots | H_n \right) ^T$. Since any matrix over $\Z$ has a Smith normal form, we consider the Smith normal form of $\Theta$ which we will call $\Theta'$. By \cref{lem:changeofbasisontorus}, the Smith normal form of $\Theta$ now represents a rectangular parallelepiped. The problem is invariant under translation and rotation, so we may identify one vertex of the rectangular parallelepiped to the origin and the row vectors of $\Theta'$ with the axes. Note that we are only interested in counting the lattice points inside the half-open parallelepiped. We can thus associate to each unit of volume a point, which gives a bijection between the number of lattice points and the volume of the parallelepiped, i.e. $|\det(\Theta)|$. The result then follows as an application of \cref{prop:facetsduality}.

\begin{cor}
\label{cor:regionsofspanningarrangements}
    Let $(\A,\T^n)$ be an arrangement of $m$ hyperplanes such that $m >n$. To each hyperplane $H_i$ associate its normal vector $\Vec{a_i}$. Then the number of $n$-dimensional flats is given by the sum over all $n$-tuples of $\A$, \[ f_n(\A) = \sum_{\substack{S \subseteq \{1,2,\ldots,m\}\\ |S|=n}} |\det(\Vec{a_{i_1}} \; |\; \Vec{a_{i_2}}\; |\; \ldots \; |\; \Vec{a_{i_n}}\;)|. \]
\end{cor}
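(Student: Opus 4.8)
The plan is to reduce the general case of $m > n$ hyperplanes to the already-established base case of exactly $n$ hyperplanes, proved in \cref{lem:numberofkflatsinnicearragements} and \cref{lem:determinatgivesregions}. The essential observation is that by \cref{prop:facetsduality} we have $f_n(\A) = f_0(\A)$, so it suffices to count the $0$-flats of $\A$, which are precisely the complete $n$-fold intersections of hyperplanes. By the definition of a $0$-flat in \cref{defn:kflats}, these arise from subsets $S \subseteq \{1,2,\ldots,m\}$ with $|S| = n$, each contributing the connected components of $\bigcap_{i \in S} H_i$.

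First I would fix such an $n$-element subset $S = \{i_1, i_2, \ldots, i_n\}$ and consider the subarrangement $\A_S = \{H_{i_1}, \ldots, H_{i_n}\}$ consisting of exactly $n$ hyperplanes. Applying \cref{lem:numberofkflatsinnicearragements} with $k = 0$ to this subarrangement gives that the number of $0$-flats contributed by $S$ is exactly $|\det(\Vec{a_{i_1}} \mid \Vec{a_{i_2}} \mid \cdots \mid \Vec{a_{i_n}})|$, via the Smith normal form argument and the covering-degree computation already carried out there. Summing over all such $S$ then yields
\[ f_0(\A) = \sum_{\substack{S \subseteq \{1,2,\ldots,m\}\\ |S|=n}} |\det(\Vec{a_{i_1}} \mid \Vec{a_{i_2}} \mid \cdots \mid \Vec{a_{i_n}})|, \]
and invoking the symmetry $f_n(\A) = f_0(\A)$ completes the argument.

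The main obstacle I anticipate is justifying that the $0$-flats coming from distinct $n$-subsets $S$ and $S'$ are genuinely disjoint, so that the count is a true sum rather than an overcount. This is exactly where the general position hypothesis enters: if a single point lay in $\bigcap_{i \in S} H_i$ and also in some hyperplane $H_j$ with $j \notin S$, then $n+1$ hyperplanes would meet at that point, violating the general position condition $H_1 \cap \cdots \cap H_{n+1} = \emptyset$. Thus each $0$-flat is the intersection of a unique $n$-subset of hyperplanes, and the decomposition $\Sigma_0(\A) = \bigsqcup_{|S|=n} \pi_0(\bigcap_{i \in S} H_i)$ is genuinely disjoint. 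I would state this disjointness explicitly and attribute it to general position before performing the sum.

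A secondary point worth a remark is that the subarrangement $\A_S$ must itself be spanning for \cref{lem:numberofkflatsinnicearragements} to apply nontrivially; however, if the $n$ normal vectors $\Vec{a_{i_1}}, \ldots, \Vec{a_{i_n}}$ fail to span, then $\det = 0$ and the corresponding term simply contributes zero, which is consistent with those $n$ hyperplanes having empty or positive-dimensional intersection rather than isolated $0$-flats. This means the formula holds uniformly over all $n$-subsets without needing to restrict attention to spanning ones, since the determinant automatically encodes the correct count.
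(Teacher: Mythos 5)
Your proposal is correct and follows essentially the same route as the paper's own proof: reduce to counting $0$-flats via the symmetry $f_n(\A) = f_0(\A)$ from \cref{prop:facetsduality}, count each $n$-subset's contribution by the determinant as in \cref{lem:numberofkflatsinnicearragements}, and use general position to guarantee the $0$-flats of distinct $n$-subsets are disjoint. Your explicit justification of the disjointness and your remark on non-spanning subsets contributing zero are welcome elaborations of steps the paper's proof leaves implicit.
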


\begin{proof}
    The arrangement $\A$ being in general position implies that the 0-flats of $\A$ do not appear with multiplicities. If we take two subsets of $n$ hyperplanes such that they differ by at least one, their intersection points are disjoint. Therefore, we can enumerate the number of 0-flat of all hyperplane subsets of cardinality $n$ which yields the number of $n$-flats of $\A$.
\end{proof}

\begin{rem}
\label{prop:simplecaseparallel}
    The definition of regions $R(\A)$ does not exclude arrangements $\A$ which are not spanning. Let $\A$ be an arrangement on $\T^n$ of $m$ hyperplanes such that the hyperplanes are pairwise parallel. Clearly, $m$ points divide $\T^1 = S^1$ into $m$ segments. If we interpret $\T^n = S^1 \times S^1 \times \ldots \times S^1$, then the hyperplanes are parallel, and we may regard them as a subset of $$\underbrace{S^1 \times S^1 \times \ldots \times S^1}_{n-1},$$ and they only intersect one of the $S^1$ in the product. The single $n$-intersection of each hyperplane then defines one region, therefore $r(\A) = m$. However, since all hyperplanes are parallel, all the regions have a non-trivial fundamental group, so by our definition of flats, $f_i = 0$ for $i \in \{0,1,\ldots,n\}$.
\end{rem}

\subsection{Analogue of Zaslavky's theorem}

Another possible way to count the regions of an arrangement $(\A,\T^n)$ is to use the characteristic polynomial of $\A$. In the context of $\R^n$, this was proven due to T. Zaslavsky in 1975.

\begin{thm}[Zaslavsky]
\label{thm:zaslavsky}
    Let $(\A,\R^n)$ be a hyperplane arrangement. Then $$r(\A) = (-1)^n\chi_\A(-1).$$
\end{thm}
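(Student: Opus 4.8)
The plan is to prove this by a double induction—on the number of hyperplanes and on the ambient dimension—combining two deletion-restriction recursions: one for the region count $r(\A)$ and one for the characteristic polynomial $\chi_\A(t)$. Recall that $\chi_\A(t)$ is defined through the Möbius function of the intersection poset $L(\A)$ by
$$\chi_\A(t) = \sum_{x \in L(\A)} \mu(\R^n, x)\, t^{\dim x},$$
with $\R^n$ serving as the bottom element $\hat{0}$. The base case is the empty arrangement in $\R^n$, for which $L(\A) = \{\R^n\}$ gives $\chi_\A(t) = t^n$, so that $(-1)^n\chi_\A(-1) = 1$, in agreement with the single region filling all of $\R^n$.

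The first key step is to establish, for a triple $(\A, \A', \A'')$ with distinguished hyperplane $H_0$, the characteristic-polynomial recursion
$$\chi_\A(t) = \chi_{\A'}(t) - \chi_{\A''}(t).$$
This is purely a statement about the Möbius functions of the three intersection posets $L(\A)$, $L(\A')$, and $L(\A'')$, and I expect it to be the main obstacle: one must compare carefully how the flats of $\A$ contained in $H_0$ contribute, tracking the sign produced by the drop in dimension when passing to the restriction. Once this recursion is in hand, the rest is bookkeeping.

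The second key step is the region recursion $r(\A) = r(\A') + r(\A'')$. For real arrangements this is exactly \cref{thm:stanleydeletionrestriction}, since $r(\A) = f_n(\A)$, $r(\A') = f_n(\A')$, and the restriction $\A''$ lives in $H_0 \cong \R^{n-1}$, so its regions are counted by $f_{n-1}(\A'')$. Geometrically, adding $H_0$ back to $\A'$ splits precisely those regions of $\A'$ that $H_0$ meets, and the number of such regions equals the number of regions cut out on $H_0$, namely $r(\A'')$.

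Finally, I would combine the two recursions in the inductive step. Evaluating the characteristic-polynomial recursion at $t=-1$ and multiplying through by $(-1)^n$ gives
$$(-1)^n \chi_\A(-1) = (-1)^n\chi_{\A'}(-1) - (-1)^n\chi_{\A''}(-1).$$
By the inductive hypothesis applied to $\A'$ (same dimension, fewer hyperplanes) the first summand equals $r(\A')$. For the second, applying the hypothesis to $\A''$ in dimension $n-1$ yields $(-1)^{n-1}\chi_{\A''}(-1) = r(\A'')$, hence $-(-1)^n\chi_{\A''}(-1) = (-1)^{n-1}\chi_{\A''}(-1) = r(\A'')$. The right-hand side therefore equals $r(\A') + r(\A'')$, which by the region recursion is $r(\A)$, closing the induction.
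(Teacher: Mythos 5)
The paper does not actually prove this theorem: it is quoted as Zaslavsky's classical 1975 result for real arrangements, and the paper only supplies a proof of its toric analogue (via the symmetry $f_0(\A)=f_n(\A)$ from \cref{prop:facetsduality}), so there is no in-paper argument to compare against. Your proposal is the standard textbook proof of the real statement and its logic is sound: the base case $\chi_\A(t)=t^n$ for the empty arrangement, the region recursion $r(\A)=r(\A')+r(\A'')$ (which is exactly \cref{thm:stanleydeletionrestriction} together with the bijection $R\mapsto R\cap H_0$ between regions of $\A'$ met by $H_0$ and regions of $\A''$), the sign bookkeeping $-(-1)^n\chi_{\A''}(-1)=(-1)^{n-1}\chi_{\A''}(-1)=r(\A'')$, and the double induction on dimension and number of hyperplanes all fit together correctly. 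The one substantive piece you identify but do not supply is the recursion $\chi_\A(t)=\chi_{\A'}(t)-\chi_{\A''}(t)$; this is a true and standard lemma (provable via Whitney's theorem or a direct Möbius-function comparison of $L(\A)$, $L(\A')$, $L(\A'')$), but as written your argument rests on it without proof, so the proposal is a correct outline rather than a complete proof. It is worth noting that this deletion--restriction route is genuinely different in spirit from what the paper does for the toric version, where the constant term of $\chi_\A$ is computed directly as $(-1)^n f_0(\A)$ and then converted to a region count by the $f$-vector symmetry; your approach is more self-contained for $\R^n$ but does not transfer to $\T^n$ without the modifications the paper develops in \cref{lem:deletionrestriction}.
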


There exists a toric analogue of \cref{thm:zaslavsky} that has been proven; see Theorem 3.6 in \cite{Ehrenborg_2009}. However, we provide another type of proof using the fact that on general arrangements $(\A,\T^n)$ we apply $f_0 (\A) = f_n (\A)$ given by \cref{prop:facetsduality}. Of course, in our case, we still assume that $\A$ is spanning and in general position.

\begin{defn}
    Let $P$ be a locally finite poset. Define a function $\mu = \mu_P: \Int(P) \to \Z$, where $\Int(P)$ is the set of all closed intervals of $P$, called the \emph{Möbius function} of $P$ by the conditions:
    \begin{align*}
        &\mu(x,x) = 1, \quad \text{for all $x \in P$},\\
        &\mu(x,y) = - \sum_{x \leq z < y} \mu(x,z), \quad \text{for all $x<y$ in $P$.}
    \end{align*}
\end{defn}

If $P$ has a minimum element $\Hat{0}$, we write $\mu(x) = \mu(\Hat{0},x)$. We will also denote by $\Int(x)$ the set of closed intervals $[\Hat{0},x]$.

\begin{lem}
    For $x \in L(\A)$, the restricted poset $\Int(x) \hookrightarrow L(\A)$ is a Boolean lattice.
\end{lem}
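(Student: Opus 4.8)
The plan is to exhibit an explicit isomorphism between $\Int(x)$ and the Boolean lattice $2^{\A_x}$ of subsets of the hyperplanes passing through $x$. First I would unwind the order convention: since $L(\A)$ is ordered by reverse inclusion, the ambient space $\T^n$ (the intersection over the empty set) is the minimum $\hat{0}$, and $\Int(x) = [\hat{0},x]$ is precisely the set of flats $z \in L(\A)$ with $z \supseteq x$. Writing $\A_x = \{H \in \A : x \subseteq H\}$ for the hyperplanes through $x$, the general position hypothesis forces $\dim x = n - |\A_x|$: intersecting the $|\A_x|$ hyperplanes of $\A_x$ lowers the dimension by at most one each, so $\dim\left(\bigcap_{H \in \A_x} H\right) \geq n - |\A_x|$, while general position gives the reverse inequality, and since $x$ is a connected component of this intersection, equality holds.

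Next I would define the candidate map $\Phi \colon 2^{\A_x} \to \Int(x)$ by sending $B \subseteq \A_x$ to the connected component $y_B$ of $\bigcap_{H \in B} H$ that contains $x$. This is the step where the toric setting departs from the affine one: intersections of toric hyperplanes may be disconnected, but selecting the component through the fixed flat $x$ makes every choice canonical. Each $y_B$ contains $x$ and hence lies in $\Int(x)$, and $B \subseteq B'$ immediately gives $y_B \supseteq y_{B'}$, so $\Phi$ is order-preserving as a map of posets.

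For bijectivity I would argue as follows. Injectivity: by general position, $B$ is exactly the set of hyperplanes of $\A_x$ containing $y_B$, because adjoining any further hyperplane of $\A_x$ to $B$ would strictly lower the dimension of the intersection below $\dim y_B = n - |B|$; hence $y_B$ determines $B$. Surjectivity: given any $z \in \Int(x)$, every hyperplane containing $z$ also contains $x$, so $\A_z \subseteq \A_x$; as above $z$ is a connected component of $\bigcap_{H \in \A_z} H$ of dimension $n - |\A_z|$, and since $x \subseteq z$ this component is exactly $y_{\A_z} = \Phi(\A_z)$. The same description of the hyperplanes containing $y_B$ shows that $y_B \supseteq y_{B'}$ implies $B \subseteq B'$, so $\Phi$ is order-reflecting; being an order-preserving, order-reflecting bijection, it is an isomorphism of posets. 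Since $2^{\A_x}$ under inclusion is a Boolean lattice, so is $\Int(x)$.

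I expect the main obstacle to be bookkeeping the connected components correctly: the definition of $L(\A)$ takes components of intersections, so I must consistently work with the component through $x$ and justify, via the dimension count from general position, that each flat $z \geq x$ really is a full component of $\bigcap_{H \in \A_z} H$ rather than merely contained in one. Once the dimension equalities $\dim y_B = n - |B|$ are in hand, the lattice structure and the identification with $2^{\A_x}$ follow formally.
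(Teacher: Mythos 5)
Your proof is correct, and it is worth comparing with the paper's. The paper argues in two steps: it first cites the fact that $L(\A)$ is a meet-semilattice and notes that $\Int(x)$ has a maximum, hence is a lattice, and then defines a map $\Int(x) \hookrightarrow 2^{\mathbf{m}}$ by $x_S \mapsto S$ and asserts the image is Boolean. You instead build the inverse map directly, $B \mapsto$ (the component of $\bigcap_{H\in B}H$ through $x$), and verify it is an order isomorphism $2^{\A_x}\cong \Int(x)$ using the dimension count $\dim y_B = n-|B|$ forced by general position. The underlying identification --- flats in $\Int(x)$ correspond to subsets of the hyperplanes through $x$ --- is the same, but your execution buys two things the paper's write-up elides. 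First, in the toric setting $\bigcap_{i\in S}H_i$ is typically disconnected, so the paper's assignment $x_S\mapsto S$ is not well-defined on $L(\A)$ as written; your insistence on always taking the component containing the fixed flat $x$ is exactly what makes the correspondence canonical on $\Int(x)$, and your injectivity/surjectivity arguments (that $y_B$ recovers $B$ as the set of hyperplanes of $\A_x$ containing it, and that every $z\supseteq x$ is a full component of $\bigcap_{H\in\A_z}H$) are the substance the paper leaves implicit. Second, you make visible where general position is actually used --- without it the map $B\mapsto y_B$ need not be injective --- whereas the paper never invokes the hypothesis explicitly. You also dispense with the meet-semilattice preliminary, since an explicit poset isomorphism with $2^{\A_x}$ gives the lattice structure for free. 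No gaps.
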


\begin{proof}
    Let $L(\A)$ be an intersection poset of an arrangement $(\A,\T^n)$ where $\A = \{ H_1,H_2,\ldots, H_m\}$. Also let $S \subseteq \{1,2,\ldots, m\} = \mathbf{m}$. By Proposition 2.3. in \cite{stanley_2015} the poset $L(\A)$ is a meet-semilattice, i.e. for every pair of elements $x,y \in L(\A)$ there exists a meet $x \land y \in L(\A)$. Since $\mu$ depends only on $\Int(x)$, we will only want to consider $\Int(x)$ for any $x \in L(\A)$. But $x$ is a maximal element for $\Int(x)$, so $\Int(x)$ must be a lattice.
    
    Define $$x_S = \bigcap_{i\in S}H_i.$$ Then $L(\A)$ is a set of (non-empty) $x_S$ for all $S$ ordered by reverse set-inclusion. Let $2^{\mathbf{m}}$ denote the Boolean poset that arises from the subsets of the set $\mathbf{m}$ and consider an inclusion map $\Int(x) \hookrightarrow 2^{\mathbf{m}}$ given by $x_S \mapsto S$. Note that $x$ corresponds to a subset $\{i_1,i_2,\ldots,i_v\} \subseteq \mathbf{m}$. Then $2^{\mathbf{m}}\bigm\vert_{\Int(x)}$ is also Boolean, since it respects the relations given in $2^{\mathbf{m}}$. Hence, $\Int(x)$ is a Boolean lattice.
\end{proof}

\begin{prop}
\label{prop:mobiusvalues}
    For an arrangement $(\A,\T^n)$, we have $\mu(x) = (-1)^{\codim(x)}.$
\end{prop}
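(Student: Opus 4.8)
The plan is to combine the preceding lemma—which identifies the interval $\operatorname{Int}(x)$ in $L(\A)$ with a Boolean lattice—with the well-known formula for the M\"obius function of a Boolean lattice. The key observation is that once we know $\operatorname{Int}(x)$ is isomorphic as a poset to the subset lattice $2^S$ for some finite set $S$, the computation of $\mu(\hat 0, x)$ is entirely intrinsic to that interval, so it reduces to a purely combinatorial fact.

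First I would recall from the previous lemma that for $x \in L(\A)$ the interval $\operatorname{Int}(x) = [\hat 0, x]$ is a Boolean lattice, via the inclusion $x_S \mapsto S$; if $x = \bigcap_{i \in S} H_i$ corresponds to the subset $S \subseteq \mathbf{m}$, then $\operatorname{Int}(x) \cong 2^S$. Next I would compute the M\"obius value on a Boolean lattice directly from the defining recursion $\mu(\hat 0, \hat 0) = 1$ and $\mu(\hat 0, y) = -\sum_{\hat 0 \le z < y} \mu(\hat 0, z)$. A clean way to do this is by induction on the rank (the cardinality $|S|$): assuming $\mu(\hat 0, z) = (-1)^{|T|}$ for every $z$ of rank $|T| < |S|$, one expands
\[
\mu(\hat 0, x) = -\sum_{T \subsetneq S} (-1)^{|T|} = -\sum_{j=0}^{|S|-1}\binom{|S|}{j}(-1)^j,
\]
and the binomial theorem gives $\sum_{j=0}^{|S|}\binom{|S|}{j}(-1)^j = 0$, so the truncated sum equals $-(-1)^{|S|}$, whence $\mu(\hat 0, x) = (-1)^{|S|}$.

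Finally I would identify the exponent $|S|$ with $\operatorname{codim}(x)$. Because $\A$ is in general position, each of the $|S|$ hyperplanes meeting in $x$ cuts the codimension down by exactly one, so the support of $x = \bigcap_{i \in S} H_i$ has codimension $|S|$ in $\T^n$; thus $|S| = \operatorname{codim}(x)$ and we obtain $\mu(x) = (-1)^{\operatorname{codim}(x)}$ as claimed.

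I expect the main obstacle to be the last identification, namely ensuring that the number $|S|$ of generating hyperplanes equals the codimension rather than merely bounding it. This is precisely where the general-position hypothesis is essential: without it, several hyperplanes could share the same intersection, so a subset $S$ of size larger than $\operatorname{codim}(x)$ could index the same flat $x$, breaking the Boolean structure and the codimension count. I would therefore take care to invoke the general-position assumption (so that any $k$ hyperplanes meet in codimension exactly $k$) to pin down that the minimal generating set $S$ for $x$ has size exactly $\operatorname{codim}(x)$, and that $\operatorname{Int}(x) \cong 2^S$ has rank equal to this codimension. The combinatorial M\"obius computation itself is routine once this is in place.
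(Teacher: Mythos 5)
Your proposal is correct and follows essentially the same route as the paper: an induction on rank using the defining recursion, the count $\binom{k}{j}$ of codimension-$j$ elements below $x$ coming from the Boolean structure of $\operatorname{Int}(x)$, and the alternating binomial sum. Your explicit justification that $|S| = \codim(x)$ via the general-position hypothesis is a point the paper only addresses in a footnote, so that extra care is welcome but not a departure in method.
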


\begin{proof}
    We induct on the rank of an arbitrary element $x \in L(\A)$. As a base case, we take the minimal element $\T^n$ for which trivially $\mu(\T^n) = (-1)^{\codim(\T^n)} = 1$. For our induction hypothesis, we will assume that $\mu(x) = (-1)^{\codim{x}}$ for any $x \in L(\A)$ with rank less than $k$, for $k \leq n$. \footnote{Note that rank of an element $x \in L(\A)$ is equal to $\codim(x)$.} Suppose now that $\codim(x) = k$. By definition, $$\mu(x) = -\sum_{y \leq x} \mu(y) = \sum_{j < k} (-1)^{j+1}| \{ y \in \Int(x) \colon \codim(y) = j\}| .$$
    For each $j$, we have $| \{ y \in \Int(x) \colon \codim(y) = j\}| = \binom{k}{j}$, as the set can be thought of as a set of subsets with cardinality $j$. Hence $$\mu(x) = \sum_{j < k} (-1)^{j + 1} \binom{k}{j} = -(-1)^{k+1} = (-1)^k.$$
\end{proof}

\begin{defn}
    The \emph{characteristic polynomial} $\chi_\A(t)$ of the arrangement $(\A,\T^n)$ is defined by $$\chi_\A(t) = \sum_{x \in L(\A)} \mu(x) \cdot t^{\dim(x)}.$$
\end{defn}

It is clear from the definitions above that the coefficient of $t^{n-1}$ in $\chi_\A(t)$ is the number of hyperplanes of $\A$. We now state the toric version of Zaslavky's theorem.

\begin{prop}
    Let $(\A,\T^n)$ be an arrangement of hyperplanes. Then $$f_n (\A) = (-1)^{n}\chi_\A(0).$$
\end{prop}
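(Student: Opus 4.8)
The plan is to read off $\chi_\A(0)$ as a weighted count of the $0$-dimensional elements of the intersection poset and then convert that count into $f_n(\A)$ by the symmetry of $f$-vectors. First I would substitute $t = 0$ into the defining sum $\chi_\A(t) = \sum_{x \in L(\A)} \mu(x)\, t^{\dim(x)}$. Every summand carries the factor $t^{\dim(x)}$, which vanishes at $t=0$ unless $\dim(x) = 0$; hence only the $0$-dimensional elements of $L(\A)$ contribute, and
\[\chi_\A(0) = \sum_{\substack{x \in L(\A)\\ \dim(x) = 0}} \mu(x).\]

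Next I would evaluate each surviving Möbius value. A $0$-dimensional element $x$ has $\codim(x) = n$, so \cref{prop:mobiusvalues} gives $\mu(x) = (-1)^{\codim(x)} = (-1)^n$, independently of $x$. Therefore $\chi_\A(0) = (-1)^n \cdot N$, where $N$ is the number of $0$-dimensional elements of $L(\A)$. Multiplying through by $(-1)^n$ yields $(-1)^n \chi_\A(0) = N$, so it remains only to identify $N$ with $f_n(\A)$.

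The key step is to recognise that the $0$-dimensional elements of $L(\A)$ are exactly the $0$-flats of $\A$, so that $N = f_0(\A)$: both are the connected components ($\pi_0$) of intersections of hyperplanes that happen to be single points, which in general position means intersections of exactly $n$ hyperplanes, matching \cref{defn:kflats}. Once this identification is made, I would conclude by invoking the symmetry of $f$-vectors in \cref{prop:facetsduality}, which gives $f_0(\A) = f_n(\A)$ for a spanning arrangement in general position with $|\A| \geq n$. Combining these, $(-1)^n \chi_\A(0) = f_0(\A) = f_n(\A)$, as desired. The only point requiring genuine care is this last identification of poset elements with flats—everything else is a direct substitution—and it is exactly the place where the spanning and general-position hypotheses enter, both to guarantee $f_0 = f_n$ and to ensure that the $0$-dimensional intersections are cut out by precisely $n$ hyperplanes.
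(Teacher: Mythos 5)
Your argument is correct and follows the paper's proof essentially verbatim: evaluate $\chi_\A(0)$ to isolate the $0$-dimensional elements of $L(\A)$, apply \cref{prop:mobiusvalues} to get $\mu(x) = (-1)^n$ for each, identify the count with $f_0(\A)$, and conclude by the symmetry $f_0(\A) = f_n(\A)$ from \cref{prop:facetsduality}. Your explicit remark that the spanning and general-position hypotheses are what make the identification of $0$-dimensional poset elements with $0$-flats (and the symmetry) valid is a point the paper leaves implicit, but the route is the same.
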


\begin{proof}
    By \cref{prop:mobiusvalues}, for any element $x \in L(\A)$ with $\dim(x) = 0$ we have $\mu(x) = (-1)^n$. Therefore, the constant term in the characteristic polynomial $\chi_\A(t)$ is $(-1)^nf_0$. The result then follows by \cref{prop:facetsduality}.
\end{proof}


\section{Classification of line arrangements}
\label{sec:sectionsix}

In the last section, we look into classifying hyperplane arrangements following from work in \cite{holmes2022affine}. For the remainder of the paper, we focus on arrangements in $\T^2$. We consider two arrangements to be combinatorially the same if one of them can be obtained from the other by continuously translating lines without ever creating a triple intersection point. 

\begin{defn}
\label{defn:equivrelation}
Let \((\mathcal{A}_1,\T^2)\) and \((\mathcal{A}_2,\T^2)\) be two arrangements. If it is possible to continuously translate hyperplanes of \(\mathcal{A}_1\) without creating a triple intersection to \(\mathcal{A}_2\), then \(\mathcal{A}_1 \sim \mathcal{A}_2\).
\end{defn}

\begin{exmp}
\label{exmp:classifyingarrangementsoftwolines}
Let \(\mathcal{A}\) be an arrangement on $\T^2$ consisting of lines given by the normal vectors \(\langle 1,1 \rangle, \langle -1, 1 \rangle,\) and \(\langle 1,0 \rangle\). We choose to fix \(\langle 1,1 \rangle, \langle -1, 1 \rangle\) and count by brute force the number of arrangements that arise by adding \(\langle 1,0 \rangle\). However, there is a restriction: we do not allow \(\langle 1,0 \rangle\) passing through points $A$ or $B$. This means that there are two regions where the line can fall into, hence there are at most two distinct arrangements in this case. However, we need to take the symmetry of $\T^2$ into consideration. Once we allow a global translation of the torus, the two arrangements are equivalent.

\begin{figure}[h!]
    \centering
    \begin{tikzpicture}[scale=.7]
        \node[outer sep=0,inner sep=0,minimum size=0] (v1) at (-5,5) {};
        \node[outer sep=0,inner sep=0,minimum size=0] (v2) at (0,5) {};
        \node[outer sep=0,inner sep=0,minimum size=0] (v4) at (0,0) {};
        \node[outer sep=0,inner sep=0,minimum size=0] (v3) at (-5,0) {};
        \node[outer sep=0,inner sep=0,minimum size=0] (v5) at (2,5) {};
        \node[outer sep=0,inner sep=0,minimum size=0] (v6) at (7,5) {};
        \node[outer sep=0,inner sep=0,minimum size=0] (v8) at (7,0) {};
        \node[outer sep=0,inner sep=0,minimum size=0] (v7) at (2,0) {};
        \fill[orange!10]  (-2.5,5) rectangle (v4);
        \fill[orange!10]  (v5) rectangle (4.5,0);
        \draw[dashed] (-2.5,5) node (v9) {} -- (-2.5,0);
	\draw[dashed] (4.5,5) -- (4.5,0);
        
        \draw [-latex] (v1) edge (v2);
        \draw [-latex] (v3) edge (v4);
        \draw [-latex] (v5) edge (v6);
        \draw [-latex] (v7) edge (v8);
        \draw [->>] (v3) edge (v1);
        \draw [->>] (v4) edge (v2);
        \draw [->>] (v7) edge (v5);
        \draw [->>] (v8) edge (v6);
        
        \draw[orange] (-1,0) -- (-1,5);
        \draw[red] (v1) -- (v4);
        \draw[blue] (v3) -- (v2);
        \draw[red] (v5) -- (v8);
        \draw[blue] (v7) -- (v6);
        \draw[orange] (3,5) -- (3,0);
        \node at (-3,2.5) {$A$};
        \node at (0.1,-0.3) {$B$};
        \node at (4.0,2.5) {$A$};
        \node at (1.8,-0.29) {$B$};
        \end{tikzpicture}
    \caption{An example of two arrangements consisting of the same hyperplanes which are equivalent under $\sim$.}
    \label{fig:distinctarrangements}
\end{figure}
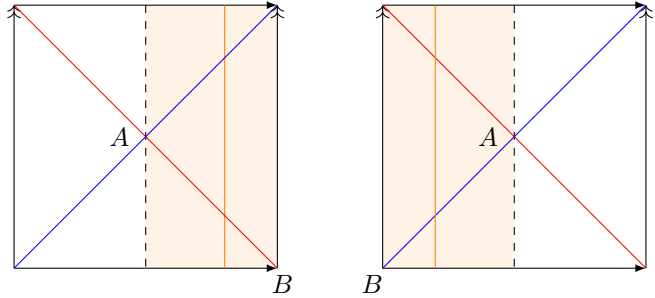
\end{exmp}

\begin{lem}
\label{lem:uniquearrangementsoftwolines}
    Let $(\A,\T^2)$ be an arrangement such that $\A=\{H_1,H_2\}$. Then there is a unique arrangement of hyperplanes $H_1$ and $H_2$.
\end{lem}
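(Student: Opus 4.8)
The plan is to exploit the fact that a triple intersection requires at least three distinct lines, so that with $|\A| = 2$ the constraint defining the equivalence relation $\sim$ is vacuous. Concretely, I would first parametrize all arrangements built from the two hyperplanes $H_1, H_2$ with their (fixed) normal vectors $\Vec{a_1}, \Vec{a_2}$. Since a hyperplane is determined by its normal vector together with its intercept $b \in S^1$ (\cref{defn:affinehyperplaneontorus}), an arrangement of $H_1$ and $H_2$ corresponds to a pair of intercepts $(b_1, b_2) \in S^1 \times S^1 \cong \T^2$. This parameter space is path-connected.

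Next I would argue that every path in this parameter space realizes an admissible translation in the sense of \cref{defn:equivrelation}. Translating the intercepts along a continuous path $(b_1(t), b_2(t))$ moves the two lines continuously, and at no time $t$ can a triple intersection appear: a triple intersection is a point lying on three distinct hyperplanes of the arrangement, but the arrangement has only two hyperplanes. Hence the forbidden locus is empty, every path is allowed, and so any two arrangements of $H_1$ and $H_2$ are related by $\sim$.

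Combining these two observations shows that the set of arrangements of $\{H_1, H_2\}$ forms a single equivalence class under $\sim$, which is exactly the claim. I would also note that this is consistent with \cref{lem:determinatgivesregions}: throughout the translation the number of $0$-flats remains equal to $|\det(\Vec{a_1} \mid \Vec{a_2})|$, so the combinatorial type genuinely does not change as the lines are moved.

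The only point requiring mild care — rather than a genuine obstacle — is the degenerate possibility that $H_1$ and $H_2$ are parallel, i.e. their normal vectors are linearly dependent, in which case there are no intersections at all; but even then the parameter space of intercepts is connected and no triple intersection can form, so the conclusion is unaffected. Thus the argument reduces to a connectivity statement together with a vacuously satisfied obstruction condition.
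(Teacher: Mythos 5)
Your proposal is correct and takes essentially the same approach as the paper: both arguments rest on the observation that a triple intersection requires three distinct lines, so the obstruction in \cref{defn:equivrelation} is vacuous for $|\A|=2$ and any continuous translation (in the connected parameter space of intercepts) is admissible. Your version merely makes the connectivity of $S^1 \times S^1$ explicit where the paper states it implicitly by translating the intersection to the origin.
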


\begin{proof}
    It is not possible to have a triple intersection with only two lines. Therefore, we can translate the two lines so that their intersection is at the origin using a global translation of the torus.
\end{proof}

\begin{lem}
\label{lem:arrangementsofthreelines}
    Let $(\A,\T^2)$ be an arrangement of three hyperplanes $H_1,H_2,$ and $H_3$. Then there is a unique arrangement of the hyperplanes up to equivalence.
\end{lem}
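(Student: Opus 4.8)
The plan is to show that, after using the global translation symmetry of $\T^2$, any two arrangements of three lines in general position are related by a sequence of admissible translations, so that they all fall into a single equivalence class under $\sim$. I would begin by invoking \cref{lem:uniquearrangementsoftwolines} to fix the intersection of two of the lines, say $H_1$ and $H_2$, at the origin; this uses up the freedom of global translation of the torus. What remains is to analyse the position of the third line $H_3$ relative to the fixed subarrangement $\{H_1,H_2\}$, and to argue that all admissible positions of $H_3$ are connected to one another through translations that never create a triple intersection.

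The key geometric input is that the forbidden positions for $H_3$ are exactly those translates passing through one of the $0$-flats of $\{H_1,H_2\}$. By \cref{lem:determinatgivesregions}, the number of such intersection points is $f_0(\{H_1,H_2\}) = |\det(\Vec{a_1}\,|\,\Vec{a_2})|$, which is finite. As we continuously translate $H_3$ (equivalently, vary its intercept $b_3 \in S^1$), the line sweeps out the whole torus, and it creates a triple intersection only at the finitely many intercept values where $H_3$ meets one of these $0$-flats. Thus the space of admissible intercepts for $H_3$ is $S^1$ with finitely many points removed, which decomposes into finitely many open arcs. Within each arc, $H_3$ may be translated freely, so all arrangements with $H_3$ in the same arc are equivalent under $\sim$.

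The crux is then to show that arrangements coming from \emph{different} arcs are also equivalent, once the symmetries of $\T^2$ are fully exploited. Here I would use the translation freedom in the other directions, or equivalently the lattice symmetry established by \cref{lem:changeofbasisontorus}: translating the whole configuration of $H_1, H_2$ by a lattice vector of the covering $\R^2 \to \T^2$ permutes the $0$-flats of $\{H_1,H_2\}$ transitively, and correspondingly carries one arc of admissible intercepts onto another. Combining this with the fact that each individual arc is internally connected, every admissible position of $H_3$ can be moved to a single canonical position. I would therefore conclude that there is exactly one equivalence class, proving the uniqueness claim.

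The main obstacle I anticipate is making the last step rigorous: it is easy to see that each arc gives a single arrangement, but identifying arrangements across arcs genuinely requires the symmetry of the torus and a careful argument that the relevant lattice translation acts transitively on the arcs while respecting the no-triple-intersection condition throughout the motion. I would spend most of the effort verifying that this global translation can be realised as an admissible continuous path in the sense of \cref{defn:equivrelation}, rather than merely as an abstract symmetry of the final configurations.
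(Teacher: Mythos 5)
Your proposal is correct and follows essentially the same route as the paper, whose two-sentence proof compresses your argument into the single observation that the triple-intersection loci are equally spaced so that the translation symmetry of the torus identifies all the resulting chambers; your version usefully spells out the decomposition of the intercept circle of $H_3$ into finitely many admissible arcs. The only points to tighten are that the translations identifying different arcs should be taken from the finite subgroup $H_1 \cap H_2 \subset \T^2$ (not from the covering lattice $\Z^2$, whose elements act trivially on $\T^2$), and that their admissibility under \cref{defn:equivrelation} is immediate, since a path of global translations preserves general position at every time.
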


\begin{proof}
    If a triple intersection occurs among the three hyperplanes, we can always fix it at the origin using global translation of the torus. In case that there are more than one triple intersection, we use the fact that the triple intersections must be equally spaced, therefore fixing any triple intersection at the origin gives the same arrangement under $\sim$.
\end{proof}

\subsection{An upper bound for the number of distinct line arrangements} In order to establish two arrangements are not equivalent, we make use of the intercept information that appears when defining a hyperplane. Translating a hyperplane is the same as changing the intercept. In consequence, we want to parametrise the intercept and describe a new arrangement, as we demonstrate in the next example.

\begin{exmp}
    We continue \cref{exmp:classifyingarrangementsoftwolines}. We write the hyperplanes in $\A$ as

    \begin{align*}
        &\theta_1: \T^2 \to S^1; \quad \theta(\Vec{x}) = x_1 + x_2 \\
        &\theta_2: \T^2 \to S^1; \quad \theta(\Vec{x}) = -x_1 + x_2 \\
        &\theta_3: \T^2 \to S^1; \quad \theta(\Vec{x}) = x_1
    \end{align*}
    
    Using the definition of hyperplanes, we describe the arrangement by
    \begin{align*}
    & \Theta : \T^2 \xrightarrow{(\theta_1,\;\theta_2,\;\theta_3)} (S^1)^3 \\
    &    \left( \begin{pmatrix}
            1 & 1 \\ -1 & 1 \\ 1 & 0
        \end{pmatrix},\begin{pmatrix}
            b_1 \\ b_2 \\ b_3
        \end{pmatrix} \right),
    \end{align*}
    for which we want to parameterise the intercepts $b_1, b_2,$ and $b_3$ corresponding to triple intersections of the hyperplanes. Therefore, we identify each arrangement with a point in $(S^1)^3$. Since the space of possible arrangements in going to be identified with $\T^3$, we look at each column of the original system of hyperplanes as a hyperplane in $\T^3$. We will call the first column $\Vec{a_1}$ and the second column $\Vec{a_2}$. We describe three arrangements in $(S^1)^3$ by taking the cross product of $\Vec{a_1}$ and $\Vec{a_2}$,
    \begin{equation*}
        \Vec{n} = \begin{pmatrix}
            1 \\ -1 \\1
        \end{pmatrix} \times \begin{pmatrix}
            1 \\ 1 \\ 0
        \end{pmatrix} = \begin{pmatrix}
            -1 \\ 1 \\ 2
        \end{pmatrix}.
    \end{equation*}
We claim that the hyperplane given by $\Vec{n}$ corresponds to the intercepts $b_1,b_2,$ and, $b_2$ giving a triple intersection. Observe that the $i$-th entry in $\Vec{n}$ gives the number of equivalence classes of arrangements we get by translating the $i$-th hyperplane, ignoring the symmetry of $\T^2$ for now. That is to say, taking the cross product gives us a vector whose entries are the solution to the restricted systems of linear equations. The $i$-th hyperplane is parameterized while the other two are fixed, so solving for the other two gives us the number of possible triple intersections when translating $H_i$. Since the $i$-th entry are hyperplanes on $S^1$ which correspond to line arrangements with a triple intersection, the 1-flats now represent an equivalence class of line arrangements under $\sim$, disregarding the symmetry of $\T^2$. For instance, the last entry suggests there are two distinct hyperplane arrangements under $\sim$, not taking the symmetry of $\T^2$ into consideration, which we have already intuitively verified.

\begin{figure}[h]
    \centering
    \begin{tikzpicture}[scale=.7]
          \tikzstyle{pt} = [fill,circle,outer sep=2,inner sep=2,minimum size=2]
          \tikzstyle{inv} = [outer sep=0,inner sep=0,minimum size=0]
            \node[outer sep=0,inner sep=0,minimum size=0] (v1) at (-3,5.5) {};
            \node[outer sep=0,inner sep=0,minimum size=0] (v2) at (2,5.5) {};
            \node[outer sep=0,inner sep=0,minimum size=0] (v4) at (2,0.5) {};
            \node[outer sep=0,inner sep=0,minimum size=0] (v3) at (-3,0.5) {};
            \node[outer sep=0,inner sep=0,minimum size=0] (v5) at (4,5.5) {};
            \node[outer sep=0,inner sep=0,minimum size=0] (v6) at (9,5.5) {};
            \node[outer sep=0,inner sep=0,minimum size=0] (v8) at (9,0.5) {};
            \node[outer sep=0,inner sep=0,minimum size=0] (v7) at (4,0.5) {};
            
            \draw [thin,-latex] (v1) edge (v2);
            \draw [thin,-latex] (v3) edge (v4);
            \draw [thin,-latex] (v5) edge (v6);
            \draw [thin,-latex] (v7) edge (v8);
            \draw [thin,->>] (v3) edge (v1);
            \draw [thin,->>] (v4) edge (v2);
            \draw [thin,->>] (v7) edge (v5);
            \draw [thin,->>] (v8) edge (v6);
            
            \draw[thick,orange] (-0.5,0.5) -- (-0.5,5.5);
            \draw[red] (v1) -- (v4);
            \draw[blue] (v3) -- (v2);
            \draw[red] (v5) -- (v8);
            \draw[blue] (v7) -- (v6);
            \draw[thick,orange] (4,5.5) -- (4,0.5);
    \draw  (3,-2.5) ellipse (6 and 1);
    \node[pt] at (0.4,-3.4) {};
    \node[pt] at (5.6,-1.6) {};
    
    \draw[decorate,decoration=zigzag,->] (-0.8,0.2) -- (0.2,-3);
    \draw[decorate,decoration=zigzag,->] (6.8,0.2) -- (5.8,-1.4);
    \end{tikzpicture}
    \caption{The intercept $b_3$ parametrised as an arrangement on $S^1$. The hyperplane $\langle 2 \rangle$ gives two arrangements with a triple intersection obtained by translating $H_3$.}
    \label{fig:exampleofparametrisinganintercept}
\end{figure}
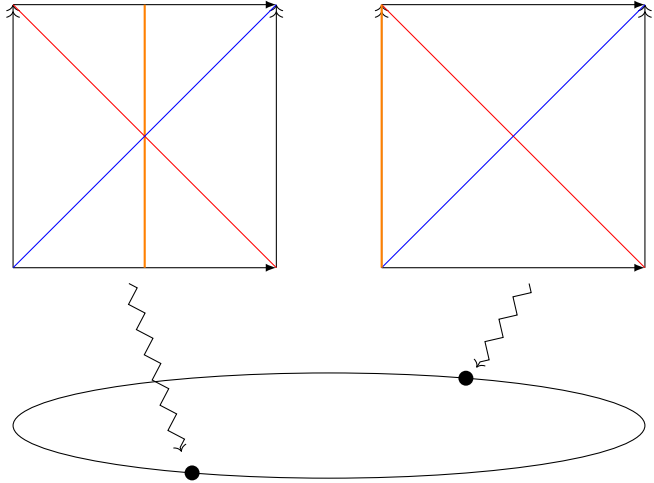

\end{exmp}

After the example, we go on to set the general construction. Considering an arrangement $(\A,\T^2)$ of $m$ lines, we put the normal vectors of hyperplanes into a system,
\begin{align*}
    (\Theta, \Vec{b}) = 
    \left(\begin{pmatrix}
        a_1 & a_2\\ a_3 & a_4 \\ a_5 & a_6 \\ \vdots & \vdots \\ a_{2m-1} & a_{2m}
    \end{pmatrix}, \begin{pmatrix} b_1\\b_2 \\ b_3 \\ \vdots \\ b_m \end{pmatrix}\right).
\end{align*}
Let $\mathcal{B}$ be a subarrangement of $\A$ consisting of three hyperplanes which are spanning (such a subarrangement always exists because $\A$ is spanning) for which we write
\begin{align*}
    \begin{pmatrix}
        a_i & a_{i+1}\\ a_{j} & a_{j+1} \\ a_k & a_{k+1}
    \end{pmatrix}\begin{pmatrix}
        x_1\\ x_2
    \end{pmatrix} = \begin{pmatrix}
        b_i\\b_j\\b_k
    \end{pmatrix}.
\end{align*}
We want to parametrise the intercepts $b_i,b_j,b_k$ of the hyperplanes and inspect which intercepts correspond to the subarrangement containing triple intersections. The arrangements with a triple intersection must be such that the system above has a solution.

\begin{lem}
    Given a system of hyperplanes $\Theta$ and an intercept vector $\Vec{b}$, there exists a vector $\Vec{x}$ satisfying
    \begin{align*}
        \begin{pmatrix}
            a_{11}&a_{12}\\a_{21}&a_{22}\\a_{31}&a_{32}
        \end{pmatrix} \begin{pmatrix} x_1\\ x_2\end{pmatrix} = \begin{pmatrix}b_1 \\ b_2 \\b_3\end{pmatrix}
    \end{align*}
    if and only if
    \[(\Vec{a_1} \times \Vec{a_2}) \cdot \Vec{b} = 0,\] where $\Vec{a_1}$ and $\Vec{a_2}$ stand for the first and second column of $\Theta$ respectively.
\end{lem}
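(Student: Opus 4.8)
The plan is to recognise the statement as a concrete instance of the elementary fact that a linear system is solvable exactly when its right-hand side lies in the column space of the coefficient matrix, and then to translate that membership condition into the vanishing of a determinant via the scalar triple product. Writing $\Theta = (\Vec{a_1} \mid \Vec{a_2})$ for the $3 \times 2$ system matrix, the system $\Theta \Vec{x} = \Vec{b}$ admits a solution $\Vec{x} = (x_1,x_2)^T$ if and only if $\Vec{b} \in \im(\Theta) = \operatorname{span}\{\Vec{a_1}, \Vec{a_2}\}$. So the whole task reduces to rewriting the span-membership condition in the stated cross-product form.

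First I would record the identity in $\R^3$ that, for any three vectors,
\[ \det(\Vec{a_1} \mid \Vec{a_2} \mid \Vec{b}) = (\Vec{a_1} \times \Vec{a_2}) \cdot \Vec{b}, \]
namely the scalar triple product. Since $\Vec{b}$ lies in $\operatorname{span}\{\Vec{a_1}, \Vec{a_2}\}$ precisely when $\Vec{a_1}, \Vec{a_2}, \Vec{b}$ are linearly dependent, and linear dependence of three vectors in $\R^3$ is detected by the vanishing of the $3 \times 3$ determinant, the chain of equivalences
\[ \Vec{x}\ \text{exists} \iff \Vec{b} \in \operatorname{span}\{\Vec{a_1},\Vec{a_2}\} \iff \det(\Vec{a_1}\mid\Vec{a_2}\mid\Vec{b}) = 0 \iff (\Vec{a_1}\times\Vec{a_2})\cdot\Vec{b} = 0 \]
delivers the lemma.

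The step that genuinely needs the hypotheses, and which I expect to be the main obstacle, is the middle equivalence: the implication "$(\Vec{a_1}\times\Vec{a_2})\cdot\Vec{b} = 0 \Rightarrow \Vec{b} \in \operatorname{span}\{\Vec{a_1},\Vec{a_2}\}$" requires $\Vec{a_1}$ and $\Vec{a_2}$ to be linearly independent. Otherwise the span is at most a line, the cross product $\Vec{a_1}\times\Vec{a_2}$ vanishes identically, and the triple-product condition would hold for every $\Vec{b}$ while the system could still be unsolvable. I would supply this independence from the standing assumption that the subarrangement $\mathcal{B}$ is spanning: spanning means the three normal vectors, which are the \emph{rows} of $\Theta$, span $\R^2$, so $\Theta$ has rank $2$; since row rank equals column rank, its two columns $\Vec{a_1}, \Vec{a_2}$ are then linearly independent and $\Vec{a_1}\times\Vec{a_2} \neq \Vec{0}$, so the orthogonal complement of $\Vec{a_1}\times\Vec{a_2}$ in $\R^3$ is exactly the plane $\operatorname{span}\{\Vec{a_1},\Vec{a_2}\}$.

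One further point I would take care to address is that $x_1, x_2$ are torus coordinates and the intercepts $b_i$ live in $S^1 = \R/\Z$, so \emph{a priori} solvability should be read modulo $\Z$ rather than over $\R$. I would handle this by lifting the equations to $\R$, observing that a genuine triple intersection point of the three hyperplanes is precisely a real solution of $\Theta\Vec{x} = \tilde{\Vec{b}} + \Vec{n}$ for a lift $\tilde{\Vec{b}}$ of $\Vec{b}$ and some integer vector $\Vec{n}$; since the entries of $\Theta$ are integers, $\Vec{a_1}\times\Vec{a_2}$ is an integer vector and $(\Vec{a_1}\times\Vec{a_2})\cdot\Vec{n} \in \Z$, so the condition descends consistently to $S^1$. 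This is the only place where the toric setting intervenes, and I expect it to require careful but routine bookkeeping rather than any new idea.
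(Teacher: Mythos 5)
Your proof is correct, and it is actually more complete than the one in the paper. The paper's proof establishes only the forward implication: from a solution $\Vec{x}$ of $\Theta\Vec{x}=\Vec{b}$ it builds the nonzero kernel vector $(x_1,x_2,-1)^T$ of the augmented matrix $(\Vec{a_1}\mid\Vec{a_2}\mid\Vec{b})$, concludes $\det(\Vec{a_1}\mid\Vec{a_2}\mid\Vec{b})=0$, and identifies this determinant with the triple product by Laplace expansion; the converse direction of the stated ``if and only if'' is not argued at all. You route the argument through column-space membership instead of the kernel of the augmented matrix, which is an equivalent way to reach the same determinant condition, but the substantive difference is that you prove the reverse implication and correctly isolate where it can fail: if $\Vec{a_1}$ and $\Vec{a_2}$ were linearly dependent, the cross product would vanish identically and the condition $(\Vec{a_1}\times\Vec{a_2})\cdot\Vec{b}=0$ would be vacuous while the system could still be unsolvable. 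Your derivation of the needed independence from the spanning hypothesis (rows span $\R^2$, so $\Theta$ has rank $2$, so the two columns are independent) is exactly the right fix, and your remark about lifting from $S^1$ to $\R$ and checking that the condition descends modulo $\Z$ addresses a genuine ambiguity in the statement that the paper glosses over. In short, your proposal repairs two gaps in the paper's own proof rather than introducing any.
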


\begin{proof}
    Suppose we have a vector $\Vec{x}$ which satisfies $\Theta \Vec{x} = \Vec{b}$. We rewrite the relation, 
    \begin{align*}
        \begin{pmatrix}
            a_{11} & a_{12} & b_1\\
            a_{21} & a_{22} & b_2\\
            a_{31} & a_{32} & b_3
        \end{pmatrix}\begin{pmatrix}
            x_1 \\ x_2 \\ -1
        \end{pmatrix} = \Vec{0}.
    \end{align*}
    Then, since there exists a non-zero vector in the kernel of $(\Vec{a_1}\mid \Vec{a_2} \mid \Vec{b})$, we have $\det(\Vec{a_1}\mid \Vec{a_2}\mid \Vec{b}) = 0$ which is equivalent to $(\Vec{a_1} \times \Vec{a_2})\cdot \Vec{b} = 0$ by Laplace expansion along the last column.
\end{proof}

Hence, $\Vec{n} \cdot \Vec{b} = (\Vec{a_1}\times \Vec{a_2}) \cdot \Vec{b}$ is the triple intersection locus in $(S^1)^3$ and $\Vec{n}$ defines a hyperplane parametrised by the intercepts. 

\begin{defn}
    Let $(\A,\T^2)$ be an arrangement of $m$ lines. Denote by $\Vec{n_{ijk}}$ the primitive cross product of column vectors of the system of hyperplanes for a spanning triple of normal vectors $\Vec{a_i},\Vec{a_j},\Vec{a_k}$. Let $\Vec{\tilde{n}_{ijk}}$ be the \emph{extended vector} of dimension $m$ where the $i$-th, $j$-th, and $k$-th coordinate are taken to be coordinates of $\Vec{n_{ijk}}$ respectively and the rest of the entries are zero.
\end{defn}

We define a new arrangement on $(S^1)^m$ of $\binom{m}{3}$ hyperplanes defined by normal vectors $\Vec{\tilde{n}_{ijk}}$ which is the parameter space of arrangements and is given by the system
\begin{align*}
    (\Tilde{\Theta}, \Vec{0}) = \left(
    \begin{pmatrix}
        \leftarrow \Vec{\Tilde{n}_{I_1}} \rightarrow\\
        \leftarrow \Vec{\Tilde{n}_{I_2}} \rightarrow\\
        \vdots\\
        \leftarrow \Vec{\Tilde{n}_{I_{\binom{m}{3}}}} \rightarrow
    \end{pmatrix},\; \Vec{0} \right),
\end{align*}
where the normal vectors of hyperplanes are indexed by unordered triples $I_i \subseteq \A$.

We can further refine the method. It follows from \cref{lem:uniquearrangementsoftwolines} that we can always fix the intercept of two lines at the origin in an arrangement $\A$. Therefore, for an arrangement of two hyperplanes $H_1 = \langle a_1,a_2 \rangle$ and $H_2= \langle a_3,a_4 \rangle$ on $\T^2$, we can fix the intercepts $b_1$ and $b_2$ of the two hyperplanes at the origin, that is, two entries of the intercept vector $\Vec{b}$ can be replaced by zero.

\begin{exmp}
Suppose we have hyperplanes $H_1,H_2,H_3,H_4$ on $\T^2$ given by normal vectors \[\Vec{a_1} = \langle 1,0 \rangle, \Vec{a_2} = \langle 0,1 \rangle, \Vec{a_3} = \langle 1,-2 \rangle, \Vec{a_4} = \langle 1,2 \rangle\] respectively. We fix the intercepts of $H_3$ and $H_4$ at the origin. Therefore, 
\begin{align*}
    (\Theta,\Tilde{\Vec{b}}) = \left(\begin{pmatrix}
        1&0\\ 0&1\\ 1&-2\\ 1&2
    \end{pmatrix}, \begin{pmatrix}
        b_1\\b_2\\0\\0
    \end{pmatrix}\right)
\end{align*}
is the system associated to the arrangement. We would like to construct a geometric space, say $\M$, where points are in correspondence with line arrangements and lines are in correspondence with arrangements containing triple intersections. Using the method above, we look at triples of spanning subarrangements of three hyperplanes. We start with the subarrangement with system of hyperplanes given by
\begin{align*}
    \left(\begin{pmatrix}
        1&0\\0&1\\1&-2
    \end{pmatrix}, \begin{pmatrix}
        b_1\\b_2\\0
    \end{pmatrix}\right),
\end{align*}
for which we compute the cross product of columns and then extend the normal vector,
\begin{align*}
    \Vec{n}_{123} = \begin{pmatrix}
        1\\0\\1
    \end{pmatrix}\times \begin{pmatrix}0\\1\\-2\end{pmatrix} = \begin{pmatrix}
        -1\\2\\1
    \end{pmatrix}, \qquad \Tilde{\Vec{n}}_{123} = \begin{pmatrix}
        -1\\2\\1\\0
    \end{pmatrix}.
\end{align*}
We repeat the process to obtain the extended system of hyperplanes in terms of $\Vec{b}$,
\begin{align*}
    (\Tilde{\Theta}, \Vec{0}) = \left( \begin{pmatrix}
        -1&2&1&0\\ -1&-2&0&1\\ 2&0&-1&-1\\ 0&4&1&-1
    \end{pmatrix}, \Vec{0} \right).
\end{align*}
Fixing the intercepts $b_3 = b_4 = 0$, we can reduce the arrangement to get
\begin{align*}
    (\Tilde{\Theta}_{\text{red}}, \Tilde{\Vec{b}}_{\text{red}}) = \left( \begin{pmatrix}
        1&-2\\1&2\\2&0\\0&4
    \end{pmatrix}, \Vec{0}\right).
\end{align*}
In addition, the intercepts of the hyperplanes are given by the fact that there is a unique arrangement where all four hyperplanes intersect, so each hyperplane has to pass through the origin. Since each hyperplane now corresponds to an arrangement containing a triple intersection, the $n$-flats represent distinct arrangements in general position.

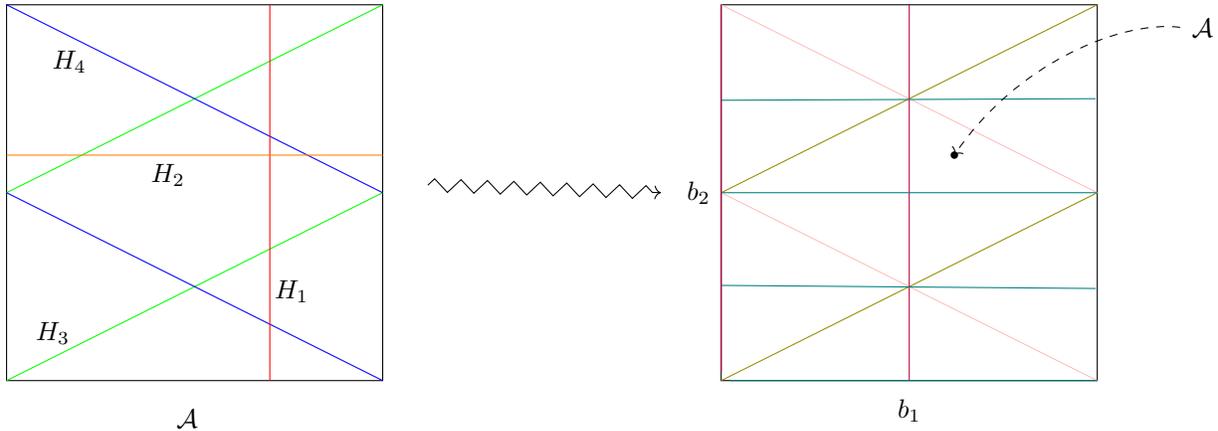
\begin{figure}[h!]
    \centering
    \usetikzlibrary{decorations.markings}
    \usetikzlibrary{decorations.pathmorphing}

\begin{tikzpicture}
        \draw  (-4,4.5) node[outer sep=0,inner sep=0,minimum size=0] (v4) {} rectangle (1,-0.5) node[outer sep=0,inner sep=0,minimum size=0] (v1) {};
        \draw [green](-4,-0.5) -- (1,2) node[outer sep=0,inner sep=0,minimum size=0] (v3) {};
        \draw [green](-4,2) node[outer sep=0,inner sep=0,minimum size=0] (v2) {} -- (1,4.5);
        \draw [blue](v1) -- (v2);
        \draw[blue](v3) -- (v4);
        \draw [red](-0.5,-0.5) -- (-0.5,4.5);
        \draw [orange](-4,2.5) -- (1,2.5);
        \node at (-3.3837,0.1314) {$H_3$};
        \node at (-3.165,3.7489) {$H_4$};
        \node at (-0.2083,0.6931) {$H_1$};
        \node at (-1.8543,2.2324) {$H_2$};
        \draw (5.5,4.5) node[outer sep=0,inner sep=0,minimum size=0] (v8) {} rectangle (10.5,-0.5) node[outer sep=0,inner sep=0,minimum size=0] (v5) {};
        \draw [olive](5.5,-0.5) node (v9) {} -- (10.5,2) node[outer sep=0,inner sep=0,minimum size=0] (v7) {} ;
        \draw[olive] (5.5,2) node[outer sep=0,inner sep=0,minimum size=0] (v6) {} -- (10.5,4.5);
        \draw [pink](v5) -- (v6);
        \draw[pink] (v7) -- (v8);
        \draw [purple](8,4.5) -- (8,-0.5);
        \draw [teal](5.4991,3.2297) -- (10.4774,3.2507);
        \draw[teal] (v6) -- (v7);
        \draw [teal](5.492,0.7668) -- (10.4818,0.7255);
        \node at (5.2,2) {$b_2$};
        \node at (8,-0.9) {$b_1$};
    	\draw [decorate,decoration=zigzag,->](1.6,2.1) -- (4.7,2);
	  \draw [purple] (v8) -- (v9);
        \draw [teal](v9) -- (v5);
\node at (-1.6,-1) {$\mathcal{A}$};
\node[fill,circle,minimum size=1,outer sep=1,inner sep=1] at (8.6,2.5) {};
\draw[dashed,->] (11.6,4.2) .. controls (10.8,4.3) and (9.4,3.7) .. (8.6,2.5);
\node at (11.9,4.2) {$\mathcal{A}$};
\end{tikzpicture}
    \caption{An example of constructing the space of arrangements consisting of hyperplanes $\M$ coming from vectors $\Vec{n_{123}}$ (olive), $\Vec{n_{124}}$ (pink), $\Vec{n_{134}}$ (purple), and $\Vec{n_{234}}$ (teal).}
    \label{fig:constructionfig}
\end{figure}
We can say more from the space of hyperplane arrangements. Since there are four arrangements with a quadruple intersection, the four points must be the same arrangement. Hence, the \cref{fig:constructionfig} suggests that the space of arrangements has a four-fold symmetry as a result of the additional symmetries of the torus. It suffices to check the fundamental domain of $\M$ to answer how many distinct hyperplane arrangements of $H_1,H_2,H_3,$ and $H_4$ we have, in this case there are 4 distinct arrangements. Note that if we fixed $b_1 = b_2 = 0$ instead, we would get an arrangement corresponding to the fundamental domain of the parametrized arrangement. This follows from the fact that fixing the hyperplanes with the least intersections gives us less additional symmetries.
\end{exmp}

The parameter space of arrangement is often times not in general position, therefore most of the results do not apply nicely to this arrangement. However, we can indicate an upper bound in the next theorem.

\begin{thm}
\label{thm:upperboundonarrangements}
    Let $(\A,\T^2)$ be a line arrangement of $m$ lines with $m > 3$ given by normal vectors $$\Vec{a_1},\Vec{a_2},\ldots,\Vec{a_m}.$$ Fix the intercept of two hyperplanes, given by $\Vec{a_g}$ and $\Vec{a_h}$ for which the absolute value of the determinant $\det(\Vec{a_g} \mid \Vec{a_h})$ is minimised, at the origin. We define an arrangement on $\T^{m-2}$ using the system $$\rho_{gh}(\Tilde{\Theta})\Vec{b} = \Vec{0},$$ where we get $\Tilde{\Theta}_{gh}$ by omitting the $g$-th and $h$-th column of $\Tilde{\Theta}$. Then the upper bound for the number of distinct arrangements of the $m$ hyperplanes is the number of $(m-2)$-flats of the arrangement described by $\rho_{gh}(\Tilde{\Theta})$,
    \[ \frac{1}{|\det(\Vec{a_g} \mid \Vec{a_h})|}\sum_{\substack{I_i \subset \{1,2,\ldots,m\}\\ |I|=3}} |\det \left( \rho_{gh}(\Vec{\Tilde{n}_{I_1}} \mid \Vec{\Tilde{n}_{I_2}} \mid \ldots \mid \Vec{\Tilde{n}_{I_m}}) \right)|. \]
\end{thm}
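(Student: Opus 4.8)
The plan is to read the quantity being bounded as a region count. Recall from the construction preceding the theorem that a general-position arrangement of the $m$ lines is pinned down by its intercept vector $\Vec{b}$, and that $\Vec{b}$ and $\Vec{b'}$ yield $\sim$-equivalent arrangements exactly when they lie in one connected component of $\T^m$ minus the triple-intersection loci $\{\Vec{\Tilde{n}_{ijk}}\cdot\Vec{b}=0\}$: translating intercepts within a component never forces a triple point. Hence the distinct general-position arrangements of the $m$ lines, before we quotient by the symmetries of $\T^2$, are precisely the top-dimensional regions of the parameter arrangement $(\Tilde{\Theta},\Vec{0})$.

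First I would check that this parameter arrangement is invariant under the global translation action $\Vec{b}\mapsto\Vec{b}+\Theta\Vec{t}$, $\Vec{t}\in\T^2$, which realises the symmetry of $\T^2$. The only computation needed is that $\Vec{\Tilde{n}_{ijk}}$, being the cross product of the two columns of the $3\times2$ submatrix of $\Theta$ on rows $\{i,j,k\}$, is orthogonal to both columns, so that $\Vec{\Tilde{n}_{ijk}}\cdot\Theta\Vec{t}=0$ for every $\Vec{t}$; thus each locus, and therefore the whole arrangement, descends to the quotient $\T^m/\T^2\cong\T^{m-2}$. After this, the number of distinct arrangements under $\sim$ equals the number of regions of the descended arrangement.

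To make that quotient computable I would realise it by the slice $\{b_g=b_h=0\}\cong\T^{m-2}$, on which the descended arrangement is exactly $\rho_{gh}(\Tilde{\Theta})$. By \cref{lem:uniquearrangementsoftwolines} this slice meets every $\T^2$-orbit, and the translations returning a point of the slice to the slice are the solutions $\Vec{t}\in\T^2$ of $\Vec{a_g}\cdot\Vec{t}=\Vec{a_h}\cdot\Vec{t}=0$, of which there are $d:=|\det(\Vec{a_g}\mid\Vec{a_h})|$ since $\Vec{t}\mapsto(\Vec{a_g}\cdot\Vec{t},\Vec{a_h}\cdot\Vec{t})$ is a $d$-fold cover of $\T^2$. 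So the slice is a free $d$-fold cover of the quotient, and, provided the quotient regions are simply connected as \cref{lem:generalpositionmeansflatsareconvex} guarantees in general position, each lifts to exactly $d$ regions of the slice, whence the number of distinct arrangements equals $f_{m-2}(\rho_{gh}(\Tilde{\Theta}))/d$. \cref{cor:regionsofspanningarrangements}, applied to the slice arrangement of $\binom{m}{3}$ hyperplanes, then evaluates $f_{m-2}$ as the stated determinant sum.

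The delicate point, and the reason the statement is an inequality as the preceding remark flags, is that every hyperplane of $\rho_{gh}(\Tilde{\Theta})$ passes through the origin of $\T^{m-2}$ (each locus satisfies $\Vec{\Tilde{n}_{ijk}}\cdot\Vec{0}=0$), so the slice arrangement is never in general position and \cref{cor:regionsofspanningarrangements} overcounts its true regions. I would close this gap by perturbing the intercepts to general position: this leaves the normal vectors, hence the determinant sum, unchanged while merely splitting the concurrent intersections, which can only increase the number of regions; therefore the true region count of the slice is at most the determinant sum, yielding the bound $\tfrac{1}{d}\sum_{|I|=3}|\det(\rho_{gh}(\cdots))|$. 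Choosing $(g,h)$ to minimise $d$ forces the fewest coincidences when pinning that pair to the origin, so the slice sits closest to the quotient and the bound is tightest; when some pair has $|\det(\Vec{a_g}\mid\Vec{a_h})|=1$ the slice is a fundamental domain and the estimate is sharp. I expect the two soft spots above, namely the simple-connectivity of the quotient regions needed to make the covering degree exactly $d$, and the monotonicity of the region count under the perturbation to general position, to be the main obstacles to a fully rigorous argument.
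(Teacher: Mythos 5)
Your proposal follows essentially the same route as the paper's own proof: parametrise arrangements by intercepts in $\T^m$, pass to the slice $\{b_g=b_h=0\}\cong\T^{m-2}$, divide out the $|\det(\Vec{a_g}\mid\Vec{a_h})|$-fold residual symmetry, and invoke \cref{cor:regionsofspanningarrangements}, with the inequality coming from the parameter arrangement failing to be in general position. In fact you supply justifications the paper leaves implicit (the orthogonality $\Vec{\Tilde{n}_{ijk}}\cdot\Theta\Vec{t}=0$ underlying the descent, and the monotonicity of the region count under perturbation), so your write-up is, if anything, more complete than the original.
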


\begin{proof}
    Extending the normal vectors $\Vec{n_{ijk}}$ to $\Vec{\Tilde{n}_{ijk}}$ gives us a new arrangement on $\T^{m}$ where each hyperplane is the locus of $ijk$-intersection locus. By omitting two columns of the system of hyperplanes $\Tilde{\Theta}$, we remove some additional symmetry of the arrangement, and $\rho_{gh}(\Tilde{\Theta})$ now represents a spanning arrangement on $\T^{m-2}$. We can also definitely remove the symmetry of the fixed hyperplanes, which is the $|\det(\Vec{a_g} \mid \Vec{a_h})|$-fold symmetry. We count the $(m-2)$-flats using \cref{cor:regionsofspanningarrangements}, but note that the arrangement need not be in general position, hence we only get an upper bound.
\end{proof}

\medskip

\bibliographystyle{alphaurl}
\bibliography{references}

\begin{thebibliography}{HHL23}

\bibitem[CP05]{corradodeconcini_procesi_2005}
Corrado~De Concini and Claudio Procesi.
\newblock On the geometry of toric arrangements.
\newblock {\em Transformation Groups}, 10(3-4):387–422, Dec 2005.
\newblock URL: \url{https://link.springer.com/article/10.1007/s00031-005-0403-3}, \href {https://doi.org/10.1007/s00031-005-0403-3} {\path{doi:10.1007/s00031-005-0403-3}}.

\bibitem[ERS09]{Ehrenborg_2009}
Richard Ehrenborg, Margaret Readdy, and MLE Slone.
\newblock Affine and toric hyperplane arrangements.
\newblock {\em Discrete and Computational Geometry}, 41(4):481--512, mar 2009.
\newblock URL: \url{https://doi.org/10.1007%2Fs00454-009-9134-x}, \href {https://doi.org/10.1007/s00454-009-9134-x} {\path{doi:10.1007/s00454-009-9134-x}}.

\bibitem[HHL23]{hanlon2023resolutions}
Andrew Hanlon, Jeff Hicks, and Oleg Lazarev.
\newblock Resolutions of toric subvarieties by line bundles and applications, 2023.
\newblock \href {https://arxiv.org/abs/2303.03763} {\path{arXiv:2303.03763}}.

\bibitem[Hic21]{hicks_2021}
Jeff Hicks.
\newblock Tropical lagrangians in toric del-pezzo surfaces.
\newblock {\em Selecta Mathematica-new Series}, 27(1), Jan 2021.
\newblock URL: \url{https://link.springer.com/article/10.1007/s00029-020-00614-1#citeas}, \href {https://doi.org/10.1007/s00029-020-00614-1} {\path{doi:10.1007/s00029-020-00614-1}}.

\bibitem[Hol22]{holmes2022affine}
Daniel Holmes.
\newblock Affine dimers from characteristic polygons, 2022.
\newblock \href {https://arxiv.org/abs/2110.01703} {\path{arXiv:2110.01703}}.

\bibitem[OT92]{orlik1992arrangements}
Peter Orlik and Hiroaki Terao.
\newblock {\em Arrangements of Hyperplanes}.
\newblock Grundlehren der mathematischen Wissenschaften. Springer Berlin Heidelberg, 1992.
\newblock URL: \url{https://books.google.co.uk/books?id=IgJrzHYBQp0C}.

\bibitem[Sta91]{stanley_1991}
Richard~P. Stanley.
\newblock f-vectors and h-vectors of simplicial posets.
\newblock {\em Journal of Pure and Applied Algebra}, 71(2-3):319–331, May 1991.
\newblock \href {https://doi.org/10.1016/0022-4049(91)90155-u} {\path{doi:10.1016/0022-4049(91)90155-u}}.

\bibitem[Sta06]{stanley_2015}
Richard~P. Stanley.
\newblock An introduction to hyperplane arrangements, 2006.
\newblock URL: \url{https://www.cis.upenn.edu/~cis6100/sp06stanley.pdf}.

\end{thebibliography}

\end{document}